\documentclass{amsart}
\usepackage{amsmath}
\usepackage{amsfonts}
\usepackage{amssymb}
\usepackage{amsthm}
\usepackage{indentfirst}
\title{\textbf{Graph approximations to the Laplacian spectra}}
\author{Jinpeng Lu}
\address{Jinpeng Lu: Department of Mathematics and Statistics, University of Helsinki, Helsinki 00014, Finland}
\email{jinpeng.lu@helsinki.fi}
\linespread{1.05}
\textheight 200mm
\textwidth 125mm
\date{}
\numberwithin{equation}{section}
\newtheorem{bounded}{Definition}[section]
\newtheorem{geodesic}[bounded]{Definition}
\newtheorem{main}{Theorem}
\newtheorem{remark}[bounded]{Remark}
\newtheorem*{mainassumption}{Assumption}
\newtheorem{remark1}[bounded]{Remark}
\newtheorem{mirror}[bounded]{Definition}
\newtheorem{main2}[main]{Theorem}
\newtheorem{main3}[main]{Theorem}
\newtheorem*{ack}{Acknowledgement}
\newtheorem{reflected}{Lemma}[section]
\newtheorem{defW}[reflected]{Definition}
\newtheorem{W}[reflected]{Lemma}
\newtheorem{ball}[reflected]{Definition}
\newtheorem{Jacobi}[reflected]{Lemma}

\newtheorem{2.2}{Lemma}[section]
\newtheorem{2.3}[2.2]{Lemma}
\newtheorem{upperbound}[2.2]{Lemma}
\newtheorem{thetax}[2.2]{Lemma}
\newtheorem{1}[2.2]{Lemma}
\newtheorem{2}[2.2]{Lemma}
\newtheorem{lowerbound}[2.2]{Lemma}
\newtheorem{function}{Definition}[section]
\newtheorem{sobolev}[function]{Lemma}
\newtheorem{eigenvalue}[function]{Definition}
\newtheorem{eigenfunction}[function]{Proposition}

\newtheorem{convergence}{Theorem}[section]
\newtheorem{newtheta}[convergence]{Lemma}
\newtheorem{new2}[convergence]{Lemma}
\newtheorem{new3}[convergence]{Lemma}
\newtheorem{new4}[convergence]{Lemma}

\newtheorem{circle11}{Example}
\newtheorem{circle21}[circle11]{Example}

\newtheorem{torus11}[circle11]{Example}
\newtheorem{circlesegment}[circle11]{Example}

\begin{document}

\maketitle

\begin{abstract}
I prove that the spectrum of the Laplace-Beltrami operator with the Neumann boundary condition on a compact Riemannian manifold with boundary admits a fast approximation by the spectra of suitable graph Laplacians on proximity graphs on the manifold, and similar graph approximation works for metric-measure spaces glued out of compact Riemannian manifolds of the same dimension.
\end{abstract}

\renewcommand{\thefootnote}{\fnsymbol{footnote}}
\footnotetext{2010 \emph{Mathematics Subject Classification.} 58J50, 58J60, 53C21, 53C23, 65N25, 05C50. \\
\indent \emph{Key words.} spectral convergence, graph Laplacian, manifold with boundary, metric-measure space, discretization.}

\section{Introduction}

In recent years geometric methods have drawn significant attention in data analysis and machine learning. The basic premise behind these methods is that high-dimensional natural point cloud data with a small number of parameters generate a low-dimensional submanifold. In such setting the underlying manifold is typically unknown, and the common strategy is to construct a proximity graph associated with the point cloud. Since the graph approximates the underlying manifold, one naturally expects to collect information on the geometry of the manifold from the structure of the graph. The learning algorithms in this setting, referred to as manifold learning, are often developed based on the Laplace-Beltrami operator of a Riemannian manifold and the associated graph Laplacian (a finite dimensional matrix). To guarantee the convergence of such algorithms, it is necessary to prove the spectral convergence of the graph Laplacian to the Laplace-Beltrami operator.

So far the works on the spectral convergence of the graph Laplacian appear in probabilistic and non-probabilistic settings. If the sampling probability distribution is a priori known, the spectral convergence in probability for closed manifolds has been studied in \cite{BN,W,TGHS}. However, essential obstacles appear when one works towards a convergence result for manifolds with boundary. It was shown in \cite{BQWZ} that the graph Laplacian approximates a first order differential operator at points near a manifold boundary, which implies that points near the boundary have significant impact on energy estimates although amount to small volume. The spectral convergence in probability for manifolds with boundary to the Neumann Laplacian was recently proved in \cite{SW,TS} without an error estimate. While an error estimate was obtained in \cite{S}, its dependency on the underlying manifold is not explicit. More general stratified spaces with certain singularities such as intersections and corners were also considered (e.g. \cite{AA,BQWZ}), but the spectral convergence has not been established for such spaces.

On the other hand, the non-probabilistic spectral convergence has been proved in D. Burago, S. Ivanov and Y. Kurylev's work \cite{BIK} for closed manifolds, where the data points are given instead of the sampling distribution. They showed that the spectrum of the Laplace-Beltrami operator on a closed Riemannian manifold admits a fast approximation by the spectra of properly weighted graph Laplacians on the manifold as long as the graphs are dense enough. The advantage of this approach is that it does not require data points to be distributed in any regular way. Furthermore, the error estimate explicitly only depends on intrinsic geometric parameters of the underlying manifold and is independent of the sampling distribution. However, it is necessary to know a discrete measure approximating the volume of the underlying manifold locally near the data points.

The purpose of this paper is to establish the non-probabilistic spectral convergence for manifolds with boundary and metric-measure spaces with certain types of singularities, as a generalization of \cite{BIK}. As discussed earlier by \cite{BQWZ}, the graph Laplacian is dominated by a first order differential operator near manifold boundaries and certain singularities (intersections, corners). Considering that data points can cluster near boundaries or singularities to any degree if no information on the sampling distribution is given, there is little hope to prove the non-probabilistic spectral convergence of the usual graph Laplacian in these spaces. Therefore, the usual graph Laplacian needs to be modified near boundaries and singularities for this type of results. In this paper, I introduce approximations of the Laplacian spectra with the Neumann boundary condition by the spectra of suitable graph Laplacians for compact Riemannian manifolds with boundary and more generally metric-measure spaces which are glued out of compact Riemannian manifolds of the same dimension. Furthermore, the error estimates explicitly only depend on intrinsic geometric parameters of the underlying manifold.

My result implies that the closeness between the spectrum of the classical Laplacian and the spectra of graph Laplacians extends beyond manifolds. To consider a potential counterexample where no restriction on volume growth is assumed, one may find out if the convergence holds for spaces which are glued out of manifolds of different dimensions (e.g. a ball with a segment attached, where the segment is equipped with $1$-dimensional Lebesgue measure), and if the limit is closely related to the classical Laplacian. Graph Laplacians and their continuous (as opposed to discrete) analogues, named $\rho$-Laplacians in \cite{BIK2}, can be defined not only on manifold structures but also for general metric-measure spaces. It was proved in \cite{BIK2} that for a large class of metric-measure spaces, the spectra of $\rho$-Laplacians are stable under metric-measure perturbations, which implies that the convergence of the spectra of graph Laplacians is equivalent to the convergence of those of $\rho$-Laplacians. We ask under what conditions on a metric-measure space one can guarantee that graph Laplacians and $\rho$-Laplacians converge to some operator in a proper sense. On the other hand, it is possible to consider graph approximations to the Hodge Laplacian, which could provide a way to study topological invariants such as Betti numbers via operators on functions on graph structures. We will address the non-probabilistic approximations to the Riemannian connection Laplacian later in another work.\\

\begin{bounded}\label{bounded}
Suppose $M$ is an $n$-dimensional compact Riemannian manifold with smooth boundary $\partial M$ (possibly $\partial M=\emptyset$). We say $M\in \mathcal{M}_n(K_1,K_2,D,i_0)$ of dimension $n$ for some $K_1,K_2,D,i_0>0$ if the following conditions hold.\\
(1) The absolute value of sectional curvatures of $M$ and the norm of the second fundamental form of $\partial M$ embedded in $M$ are bounded by $K_1$;\\
(2) The norm of the covariant derivative of the curvature tensor of $M$ is bounded by $K_2$;\\
(3) The diameter of $M$ is bounded by $D$;\\
(4) The injectivity radius of $M$ is bounded below by $i_0$.
\end{bounded}
Recall that the injectivity radius for a manifold with boundary is defined as the minimum of the radius within which the exponential map at any point away from the boundary is a diffeomorphism, the radius within which the boundary normal coordinates exist, and the injectivity radius of the boundary. \\

For $M\in\mathcal{M}_n(K_1,K_2,D,i_0)$, we present a weighted graph structure for an arbitrary net on the manifold, which was introduced in \cite{BIK} for closed manifolds. First we fix two positive parameters $\varepsilon,\rho$ with $\varepsilon\ll\rho\ll 1$. Suppose $X_{\varepsilon}=\{x_i\}_{i=1}^N$ is a finite $\varepsilon$-net on the manifold; it forms the vertices of our graph. For the sake of simplicity, we require $x_i\notin \partial M$. Two vertices $x_i,x_j$ are connected by an edge if and only if their Riemannian distance $d(x_i,x_j)$ is smaller than $\rho$. We take any partition of the manifold into measurable subsets $V_i$ satisfying $V_i\subset B_{\varepsilon}(x_i)$, where $B_{\varepsilon}(x_i)$ is the standard geodesic ball of the manifold around $x_i$ with the radius $\varepsilon$. Then we assign the Riemannian volume of $V_i$ as the weight $\mu_i$ to each vertex $x_i$. Denote this weighted graph by $\Gamma_{\varepsilon,\rho}=\Gamma(X_{\varepsilon},\mu,\rho)$ with $\varepsilon\ll\rho\ll 1$. \\
\indent For closed manifolds and for any function $u$ on $X_{\varepsilon}$, the weighted graph Laplacian $\Delta_{\Gamma}$ is defined by
$$\Delta_{\Gamma} u(x_i)=\frac{2(n+2)}{\nu_n \rho^{n+2}}\sum_{j: d(x_i,x_j)<\rho} \mu_j(u(x_j)-u(x_i)), \textrm{ for }\partial M=\emptyset,$$ 
where $\nu_n$ is the volume of the unit ball in $\mathbb{R}^n$. This is a nonpositive self-adjoint operator with respect to the weighted discrete $L^2(X_{\varepsilon})$ norm. It was proved in \cite{BIK} that the spectra of the weighted graph Laplacians converge to the spectrum of the Laplace-Beltrami operator on a closed manifold as $\rho+\varepsilon/\rho\to 0$. The proof of the convergence relies on energy estimates which are optimal for closed manifolds. The presence of a manifold boundary or any singularity will prevent such estimates to achieve the required optimum, which motivates us to impose a symmetry assumption (to be explained later) on general metric-measure spaces. For manifold boundaries, this issue can be resolved by intuitively doubling the manifold.\\
\indent For manifolds with boundary, the graph Laplacian needs to be defined in order to gather sufficient information from the boundary, resulting in a slightly different form. First of all, we define a meta-metric $\widetilde{d}$ as follows and its properties are explained in Section $2$.

\begin{geodesic}\label{geodesic}
Let $d$ be the Riemannian distance function on $M$, and for $x,y\notin \partial M$, we define $\widetilde{d}$ as
\begin{eqnarray}\label{newd}
\widetilde{d}(x,y)=\inf_{z\in \partial M}\big(d(x,z)+d(z,y)\big),
\end{eqnarray}
and we call its minimizers by reflected geodesics. Notice that $\widetilde{d}$ satisfies the triangle inequality.
\end{geodesic}
We define the graph Laplacian $\Delta_{\Gamma}$ by
\begin{equation}\label{boundarylaplacian}
\Delta_{\Gamma}u(x_i) = \frac{2(n+2)}{\nu_n \rho^{n+2}}\bigg(\sum_{j: d(x_i,x_j)<\rho}\mu_j(u(x_j)-u(x_i))+\sum_{j: \widetilde{d}(x_i,x_j)<\rho}\mu_j(u(x_j)-u(x_i))\bigg),
\end{equation}
where $\nu_n$ is the volume of the unit ball in $\mathbb{R}^n$. Note that $\Delta_{\Gamma}$ is a finite dimensional linear operator$-$a matrix of dimension $N$, where $N$ is the total number of vertices. For $k\in\mathbb{N}$, denote the $k$-th eigenvalue of the graph Laplacian $-\Delta_{\Gamma}$ by $\lambda_k(\Gamma)$, and the $k$-th eigenvalue of the Laplace-Beltrami operator $-\Delta_M$ with the Neumann boundary condition by $\lambda_k(M)$. I prove that $\lambda_k(\Gamma)$ converges to $\lambda_k(M)$ for every nonnegative integer $k\leqslant N(X_{\varepsilon})-1$ as $\rho+\frac{\varepsilon}{\rho}\to 0$, where $N(X_{\varepsilon})$ denotes the number of points of the $\varepsilon$-net $X_{\varepsilon}$. More precisely,

\begin{main}\label{main}
Let $M\in \mathcal{M}_n(K_1,K_2,D,i_0)$ and $\Gamma_{\varepsilon,\rho}=\Gamma(X_{\varepsilon},\mu,\rho)$ be a weighted graph defined as above. Then for every nonnegative integer $k\leqslant N(X_{\varepsilon})-1$, there exists $\rho_0=\rho_0(n,i_0,D,K_1,K_2,\lambda_k(M))$ and $C=C(n,i_0,D,K_1,K_2)$, such that for any $\rho<\rho_0$, one has the following estimate:
$$|\lambda_k(\Gamma_{\varepsilon,\rho})-\lambda_k(M)|\leqslant C(\rho+\frac{\varepsilon}{\rho}+\rho\lambda_k(M)^{\frac{n}{2}+1})\lambda_k(M)+ C\rho .$$
Consequently, the eigenfunctions \textup{(see Remark \ref{remark})} of the graph Laplacians (\ref{boundarylaplacian}) converge to a respective eigenfunction of the Laplace-Beltrami operator with the Neumann boundary condition in $L^2(M)$.
\end{main}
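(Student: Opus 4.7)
My plan is to adapt the two-sided min-max strategy of \cite{BIK} to the boundary setting, with the reflected distance $\widetilde{d}$ playing the role of a ``doubled manifold'' device that turns the Neumann boundary into an interior symmetry. The core objects are a discretization map $P\colon C(M)\to L^2(X_\varepsilon,\mu)$ given by $(Pf)(x_i)=\frac{1}{\mu_i}\int_{V_i}f\,dV$ and an extension map $\widetilde P\colon L^2(X_\varepsilon,\mu)\to L^2(M)$ defined by $\widetilde P u=\sum_i u(x_i)\mathbf 1_{V_i}$; both are near-isometries up to $O(\varepsilon)$-errors controllable by curvature bounds. The strategy is then to transfer the first $k+1$ eigenfunctions of $-\Delta_M$ through $P$ to produce a good $(k+1)$-dimensional test space for the Rayleigh quotient of $-\Delta_\Gamma$, and conversely to transfer the first $k+1$ eigenvectors of $-\Delta_\Gamma$ through $\widetilde P$ (followed by a $\rho$-scale smoothing) to a $(k+1)$-dimensional test space in $H^1(M)$.

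For the upper bound on $\lambda_k(\Gamma)$, take orthonormal eigenfunctions $\phi_0,\dots,\phi_k$ of $-\Delta_M$ with Neumann condition; by elliptic regularity and bounded geometry they satisfy $\|\phi_\ell\|_{C^2}\le C(1+\lambda_\ell(M)^{n/4+1})$. Writing the discrete energy
\begin{equation*}
\langle -\Delta_\Gamma P\phi,\,P\phi\rangle_\mu=\frac{n+2}{\nu_n\rho^{n+2}}\sum_{i,j}\mu_i\mu_j\bigl(\mathbf 1_{d(x_i,x_j)<\rho}+\mathbf 1_{\widetilde d(x_i,x_j)<\rho}\bigr)\bigl(\phi(x_j)-\phi(x_i)\bigr)^2,
\end{equation*}
I would replace each $(x_i,x_j)$-sum by an integral over $V_i\times V_j$ at cost $O(\varepsilon/\rho)$, then do a second-order Taylor expansion of $\phi_\ell$ along the minimizing (ordinary or reflected) geodesic joining $x,y$. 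The standard ``interior'' pairs contribute the classical formula $\int_M|\nabla\phi_\ell|^2\,dV$ up to $O(\rho)$; the ``reflected'' pairs, after a change of variable using a Jacobi-field volume comparison along reflected geodesics (this is where the hypotheses $K_1,K_2,i_0$ enter through the sublemma mentioned in the excerpt), contribute the same integral restricted to a boundary collar, which combined with the Neumann condition $\partial_\nu\phi_\ell=0$ cancels the anomalous first-order term flagged in \cite{BQWZ}. This yields $\lambda_k(\Gamma)\le\lambda_k(M)+C(\rho+\varepsilon/\rho+\rho\lambda_k(M)^{n/2+1})\lambda_k(M)+C\rho$.

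For the reverse direction, let $v_0,\dots,v_k$ be orthonormal eigenvectors of $-\Delta_\Gamma$ with eigenvalues $\lambda_\ell(\Gamma)$, and form $f_\ell=\eta_\rho*\widetilde P v_\ell$ where $\eta_\rho$ is a $\rho$-scale mollifier compatible with the reflected-distance construction near $\partial M$ (so that $f_\ell$ respects the Neumann structure). One shows $\|f_\ell\|_{L^2(M)}^2=\|v_\ell\|_\mu^2+O(\rho+\varepsilon/\rho)$ and $\int_M|\nabla f_\ell|^2\le\langle-\Delta_\Gamma v_\ell,v_\ell\rangle_\mu\bigl(1+O(\rho+\varepsilon/\rho)\bigr)$; together with near-orthogonality of the $f_\ell$'s, min-max on $M$ gives $\lambda_k(M)\le\lambda_k(\Gamma)+$ errors of the same form, and the two inequalities combine to the claimed estimate. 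Convergence of eigenfunctions follows by extracting $L^2$-limits of $f_\ell$ along a subsequence and identifying them as eigenfunctions with the matching eigenvalue, a standard argument once the spectral gap is controlled.

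The main obstacle I anticipate is the energy comparison for the reflected-distance part near $\partial M$: one needs a volume/Jacobian estimate along reflected geodesics that is uniform in the base point and matches, to leading order, the flat reflection picture, so that the ``doubled'' graph Dirichlet form reproduces $\int_M|\nabla\phi|^2$ without a rogue first-order contribution. This rests on controlling how the exponential map and the boundary normal coordinates interact with the reflection operation, which is exactly what assumptions (1)--(4) of Definition \ref{bounded} are designed to supply through the geometry of Jacobi fields along broken geodesics meeting $\partial M$; proving that with explicit constants, and in a way that survives the passage from integrals over geodesic balls to sums over the net, is the technical heart of the argument.
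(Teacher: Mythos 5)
Your blueprint (discretize the first $k+1$ Neumann eigenfunctions via $P$ for the upper bound, smooth the first $k+1$ discrete eigenvectors via a $\rho$-scale kernel built from the reflected distance for the lower bound, and compare through min--max) is the same as the paper's, but two points at exactly the technical heart are off or missing. First, the role you assign to the Neumann condition is a red herring: in the Dirichlet form there is no first-order term to cancel (the anomaly of \cite{BQWZ} concerns pointwise consistency of $\Delta_\Gamma$ near $\partial M$, which never enters a min--max argument), and the energy bound $\widehat{E}_r(f)\leqslant (1+Cr)\frac{\nu_n}{n+2}r^{n+2}\|\nabla f\|_{L^2(M)}^2$ of Lemma \ref{2.2} is proved for \emph{every} smooth $f$, with no boundary condition, simply by doubling $M$ along $\partial M$ and invoking the closed-manifold estimate of \cite{BIK}; the Neumann condition enters the theorem only through the min--max characterization of $\lambda_k(M)$ over unconstrained $H^1(M)$. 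Moreover, a genuine second-order Taylor expansion would require $C^2$ control of the eigenfunctions, hence a worse power of $\lambda_k$ than the bound $\|\nabla f_k\|_{L^\infty}^2\leqslant C(\lambda_k^{n/2+2}+1)\|f_k\|_{L^2}^2$ that the paper obtains from $H^{n/2+2}\subset C^1$; the integral-along-(reflected-)geodesics argument avoids this and is what produces the stated error exponent.

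Second, and more seriously, on the lower-bound side the phrase ``a $\rho$-scale mollifier compatible with the reflected-distance construction'' conceals the two mechanisms the proof actually needs and which you neither construct nor identify: (i) the smoothing operator must be normalized, $I_r=\theta^{-1}\Lambda_r$ with $\theta=\Lambda_r(1_M)$, and one must prove not only $|\theta-1|\leqslant Cr$ but the crucial gradient bound $|\nabla\theta|\leqslant C$ of Lemma \ref{thetax} --- an extra factor of $r$ over the naive $C/r$ bound, gained from the symmetry $\int_{\mathcal{B}_r(0)}v\,dv=0$ of the (reflected) ball in $T_xM$ together with the Jacobian estimate of Lemma \ref{Jacobi}; without this gain the energy estimate for $I_r(P^{\ast}u)$ degrades by a factor $1/r$ and the lower bound collapses, which is precisely why the unmodified graph Laplacian fails near $\partial M$; (ii) reflected geodesics that touch $\partial M$ tangentially or re-intersect it after the collision must be excised: one needs the uniqueness and single-collision property of $\widetilde{d}$-minimizers (Lemma \ref{reflected}), the measure bound $m(\mathcal{W}_r(x))\leqslant Cr^{n+1}$ (Lemma \ref{W}), and the resulting fact that the extended exponential map is a homeomorphism onto $\widehat{B}_r(x)$ with Jacobian $1+O(r)$, so that adding these bad sets back into the $d$- and $\widetilde{d}$-sums of the discrete energy only produces the higher-order term $C\rho\,\|\nabla f\|_{L^\infty}^2$. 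Your closing paragraph correctly anticipates that a uniform Jacobi-field/volume estimate along reflected geodesics is needed, but it is these two devices --- the normalization with its order-$r$ gain from ball symmetry, and the $\mathcal{W}_r$ excision --- rather than any cancellation coming from $\partial_\nu\phi_\ell=0$, that make the argument close.
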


\begin{remark}\label{remark}
We need a precise statement for the convergence of eigenfunctions of graph Laplacians, since they are actually discrete functions. Given a function $u$ on the vertices $X=\{x_i\}_{i=1}^N$, we can define a piecewise constant function $P^{\ast}u$ on $M$ by $P^{\ast}u=\sum_{i=1}^{N} u(x_i)1_{V_i}$. In Theorem \ref{main} and the two theorems that follow, the convergence of the eigenfuctions of graph Laplacians means exactly the convergence of the piecewise constant functions defined via the operator $P^{\ast}$.
\end{remark}

Now suppose $M$ is a more general metric-measure space which is isometrically glued out of $n$-dimensional compact Riemannian manifolds $M_l\in \mathcal{M}_n(K_1,K_2,D,i_0)$ without boundary or with smooth boundary, and every connected component of each nonempty gluing locus $M_j\cap M_l(j\neq l)$ is a $C^2$ submanifold of both $M_j$ and $M_l$ of codimension at least $1$ with piecewise $C^2$ boundary. Denote the whole gluing locus by $S=\cup_{j<l}(M_j\cap M_l)$. Without loss of generality, we assume each gluing locus is connected. Denote the Riemannian distance of $M_j$ by $d_j$, and $\widetilde{d}_j$ is defined with respect to $d_j$ as in Definition \ref{geodesic}. Denote the $r$-neighborhood of a nonempty gluing locus $M_j\cap M_l(j\neq l)$ within $M_j$ by $\Omega_r^{jl}=\{x\in M_j: d_j(x,M_j\cap M_l)<r\}$.
Then we define the \emph{reflected $r$-neighborhood} by
\begin{equation}\label{Omegajl}
\widehat{\Omega}_r^{jl}=\Omega_r^{jl}\sqcup \{ x\in M_j: d_j(x,M_j\cap M_l)<\widetilde{d}_j(x,M_j\cap M_l)<r\},
\end{equation}
where the union is a disjoint union. In (\ref{Omegajl}) we require $d_j$ to be strictly less than $\widetilde{d}_j$ for the sole reason that they both include the points which can be reached by geodesics touching the boundary, and such points are characterized by $d_j$ and $\widetilde{d}_j$ being equal. Note that we only defined $\widetilde{d}_j$ for interior points in (\ref{newd}). When either point is on the boundary, $\widetilde{d}_j$ by definition reduces to $d_j$. Hence if $M_j\cap M_l$ belongs to a manifold boundary, the reflected $r$-neighborhood (\ref{Omegajl}) simply reduces to the disjoint double copies of $\Omega_r^{jl}$. We impose the following assumption on the metric-measure spaces in question. The motivation behind this assumption is explained at the end of this section.

\begin{mainassumption}\label{A}
There exists $r_0>0$, such that for any $j\neq l$ and $M_j\cap M_l \neq \emptyset$, there exist homeomorphisms $\Phi_{jl}: \widehat{\Omega}_{r_0}^{jl}\to \widehat{\Omega}_{r_0}^{lj}$ satisfying: \\
$(1)$ $\Phi_{jl}|_{M_j\cap M_l}=Id|_{M_j\cap M_l}$; \\
$(2)$ For any vector $v \in T\widehat{\Omega}_{r_0}^{jl}$, the directional derivative $(\nabla \Phi_{jl})v$ of $\Phi_{jl}$ with respect to $v$ exists, and $||\nabla \Phi||_r\to 1$ as $r\to 0$,
where 
$$||\nabla\Phi||_{r}:=\max_{j\neq l} \sup_{v \in T\widehat{\Omega}_{r}^{jl}} \frac{|(\nabla\Phi_{jl})v|}{|v|}.$$ 
\end{mainassumption}

\begin{remark1}\label{remark1}
One can immediately see why we use the reflected neighborhood. Suppose the gluing locus is the boundary of one manifold part and does not intersect the boundary of the other manifold part. The standard neighborhood of the gluing locus within the first manifold is collar while tubular within the other manifold. The homeomorphism satisfying the Assumption clearly does not exist. It is also worth pointing out that the $r_0$-neighborhoods required in the Assumption do not have to be strict $r_0$-distance from the gluing locus. In fact, the existence of homeomorphisms satisfying the conditions above between any small open neighborhoods, regardless of collar or tubular, of the gluing locus will suffice.
\end{remark1}

Thanks to the dimension homogeneity, the Assumption is satisfied by a large class of metric-measure spaces in question. One obstacle for constructing such a homeomorphism is the Cauchy non-uniqueness for geodesics near a manifold boundary, for instance a boundary defined by $y=e^{-1/x^2}\sin(1/x)$. If geodesics near the gluing locus enjoy the Cauchy uniqueness property, or better do not intersect the boundary, the homeomorphisms satisfying the conditions are generated by geodesics from the gluing locus. For example, if a gluing locus has $C^2$ boundary and does not intersect with manifold boundaries, the homeomorphisms are straightforward to construct via geodesics within the normal coordinates of the gluing locus and its boundary. Due to the estimate on the length of the Jacobi field, we have $||(\nabla\Phi)v|-|v||\leqslant Cr^2 |v|$ for all vectors $v$ in tangent spaces over $r$-neighborhoods of the gluing locus for $r\ll 1$. This implies the Assumption with an explicit rate $o(1)=C(K_1)r^2$. If a gluing locus without boundary intersects a manifold boundary, the same construction can be done with the same rate. In this case, we simply choose the double copies of small neighborhoods within the one side of the gluing locus away from the manifold boundary (see Remark \ref{remark1}) to avoid the possible Cauchy non-uniqueness for geodesics near manifold boundaries. 
\indent In other words, the Assumption means that the $r$-neighborhood of the gluing locus within one manifold part can be mapped to $r$-neighborhoods within other manifold parts via almost isometries up to an infinitesimal error as $r \to 0$. This assumption gives us the desired symmetry: one can map a ball in tangent spaces via the homeomorphisms to almost another ball with little distortion. By virtue of the Assumption, we are able to recover the symmetry near the gluing locus as follows.
\begin{mirror}\label{mirror}
For any $\rho<\frac{1}{10}r_0^{\frac{4}{3}}\ll 1$ and a point $x\in M_j\subset M$ within $\rho^{\frac{3}{4}}$ distance (with respect to the Riemannian distance of $M_j$) from a nonempty gluing locus $M_j\cap M_l(j\neq l)$, we define the mirror image $x^l \in M_l$ of $x$, by $x^l=\Phi_{jl}(x)$. And denote $x^j=x$ for convenience. Notice that there could be two images via $\Phi_{jl}$ when the domain is a disjoint union. In this case, we choose any (one) image whose pre-image via $\Phi_{jl}$ belongs to the region defined by the standard Riemannian distance $d_l$ of $M_l$ instead of $\widetilde{d}_l$.
\end{mirror}
Note that the power $\frac{3}{4}$ in the definition is chosen to reconcile various inequalities to produce the convergence, which we will see later in the proof. Actually the power can be any number between $\frac{2}{3}$ and $1$ to guarantee the convergence. \\

\indent We construct the weighted graphs $\Gamma_{\varepsilon,\rho}=\Gamma(X_{\varepsilon},\mu,\rho)$ for a metric-measure space $M$ by choosing weighted graphs with the same parameters $\varepsilon\ll\rho$ for each manifold part $M_l$. Denote by $N(X_{\varepsilon})$ the number of points of the $\varepsilon$-net $X_{\varepsilon}$. The graph Laplacian in this case is defined as
\begin{equation}\label{metriclaplacian}
\Delta_{\Gamma}u(x_i) = \frac{2(n+2)}{\nu_n \rho^{n+2}}\sum_l \bigg(\sum_{j: d_l(x_i^l,x_j)<\rho}\mu_j(u(x_j)-u(x_i))+\sum_{j: \widetilde{d_l}(x_i^l,x_j)<\rho}\mu_j(u(x_j)-u(x_i))\bigg),
\end{equation}
where $\nu_n$ is the volume of the unit ball in $\mathbb{R}^n$ and $\widetilde{d}_l$ is defined as in Definition \ref{geodesic} with respect to the Riemannian distance $d_l$ of $M_l$. The $k$-th eigenvalue of the graph Laplacian $(1.4)$ is denoted by $-\lambda_k(\Gamma)$. On the other hand, the Laplacian eigenvalue problem with the Neumann boundary condition can be defined on the metric-measure space in question as shown in Section $3$, and denote the $k$-th eigenvalue by $\lambda_k(M)$ for $k\in\mathbb{N}$. For higher codimension there is no restriction at the gluing locus, implying that the spaces can be regarded as disjoint for the purpose of spectra (Lemma \ref{sobolev}(3)). Therefore we focus on the case of codimension $1$ gluing loci, where a natural Kirchhoff-type condition can be imposed: the sum of all normal derivatives at the gluing locus vanishes. I prove the following convergence result.

\begin{main2}\label{main2}
Suppose $M$ is a metric-measure space which is isometrically glued out of compact Riemannian manifolds of the same dimension without boundary or with smooth boundary. Assume every connected component of each nonempty gluing locus is a $C^2$ submanifold of codimension at least $1$ with piecewise $C^2$ boundary and satisfies the Assumption. Then $\lambda_k(\Gamma_{\varepsilon,\rho})$ converges to $\lambda_k(M)$ for every nonnegative integer $k\leqslant N(X_{\varepsilon})-1$ as $\rho+\frac{\varepsilon}{\rho}\to 0$. \\
Consequently the eigenfunctions of the graph Laplacians (\ref{metriclaplacian}) converge to a respective eigenfunction of the Laplacian eigenvalue problem with the Neumann boundary condition in $L^2(M)$.
\end{main2}

In particular, if each nonempty gluing locus is a codimension $1$ submanifold and does not intersect with manifold boundaries, or if a gluing locus without boundary intersects with manifold boundaries, the Assumption is satisfied as discussed earlier, which leads to an explicit convergence rate.

\begin{main3}\label{main3}
Suppose $M$ is a metric-measure space which is isometrically glued out of $m$ number of compact Riemannian manifolds $M_l\in \mathcal{M}_n(K_1,K_2,D,i_0)$. Assume every connected component of each nonempty gluing locus is a $C^2$ submanifold of codimension $1$ with $C^2$ boundary, and does not intersect with manifold boundaries unless the gluing locus is without boundary. Then for every nonnegative integer $k\leqslant N(X_{\varepsilon})-1$, there exists $\rho_0=\rho_0(n,m,i_0,D,K_1,K_2,r_0,\lambda_k(M))$ and $C=C(n,m,i_0,D,K_1,K_2,vol_{n-1}(S))$, such that for any $\rho<\rho_0$, one has the following estimate:
$$|\lambda_k(\Gamma_{\varepsilon,\rho})-\lambda_k(M)|\leqslant C(\rho^{\frac{1}{4}}+\frac{\varepsilon}{\rho}+\rho^{\frac{1}{4}}\lambda_k(M)^{\frac{n}{2}+1})\lambda_k(M) +C\rho^{\frac{1}{4}},$$
where $vol_{n-1}(S)$ is the $(n-1)$-dimensional volume of the gluing loci $S$, and $r_0$ explicitly depends only on $i_0$, the lower bound of the injectivity radius of each connected component of the gluing loci $S$ and the upper bound of the absolute value of sectional curvatures of $S$.
\end{main3}

From earlier discussions, Theorem \ref{main3} can be generalized to gluing loci with piecewise $C^2$ boundary and the case where the Cauchy uniqueness property holds for geodesics within a small neighborhood of the gluing locus. It is possible to consider even more general situations, but the convergence rate could heavily depend on the geometry of the gluing loci.

\subsection*{Ideas of the proof}

The blueprint of the proof follows from the original proof for closed manifolds in \cite{BIK}, which is constructing a discretization operator and a smoothing operator to compare functions on graphs with functions on manifolds via the min-max principle for eigenvalues. The main obstruction of proving the convergence in our cases is that the key estimates are optimal for closed manifolds and are heavily disrupted by any possible singularity (manifold boundary is viewed as a type of singularity), due to major changes in the symmetry and the volumes of small balls. The discretization operator is still there, but the same thing cannot be said about the smoothing operator, which needs to be constructed with the following considerations.\\
$(1)$ The smoothing operator should map every discrete function to a Lipschitz function. This ensures a proper connection via min-max formulae between discrete side and continuous side. For metric-measure spaces which are glued out of manifold parts, we use a suitable cut-off function to ensure continuity.\\
$(2)$ The smoothing operator should map constant discrete functions to constant smooth functions within a proper error. Suppose the averaging radius of the smoothing operator is $r\ll 1$ and it turns out that the error of the pointwise derivative has to be better than $r^{-1}$, which cannot be achieved without the help of some symmetry. For manifold boundaries, this can be resolved by adding back part of small balls defined via the meta-metric $\widetilde{d}$ and partially restoring the symmetry. The minimizers of $\widetilde{d}$ may touch or collide (not tangentially) with the boundary. We later show that there can only be one collision point, and such minimizers are characterized by the classical reflection, i.e. reversing the normal component and preserving the tangent component with respect to the tangent space of the boundary at the collision point. The situation of minimizers touching the boundary anywhere is difficult to analyze, for such minimizers are not governed solely by the geodesic equation anymore. However, we prove that this situation only generates higher order terms. Therefore, one can restrict attention to a reduced domain where only the simple behavior of the classical reflection occurs, and obtain key estimates similar to the ones for closed manifolds up to a higher order term. This is how we get the extra error term $\rho\lambda_k^{n/2+2}$ in Theorem \ref{main} and \ref{main3} compared with the result for closed manifolds. Since constant discrete functions on graphs are clearly eigenfunctions of the graph Laplacians with respect to zero eigenvalue, it is no surprise that eventually the spectra of graph Laplacians approximate the spectrum of the Laplace-Beltrami operator with the Neumann boundary condition.\\
\indent For metric-measure spaces glued out of manifolds, the singularities are much more serious. For instance, a standard metric ball near a gluing locus can intersect with multiple manifold parts and have an arbitrary shape and volume. This asymmetry of balls caused by the gluing significantly worsens estimates and consequently destroys the convergence. It is crucial to find a uniform and consistent way of averaging, which intuitively works as follows. Consider the simple space $M$ of three planar rectangles $M_1,M_2,M_3$ glued along an edge (e.g. book pages) and a point $x\in M_1$ near the edge. We take the two points corresponding to $x$ in the other two rectangles in the most natural way, since a small neighborhood of the gluing locus within $M_1$ is isometric to some small neighborhoods within $M_2,M_3$. Instead of averaging in a standard metric ball of $M$ around $x$, we take three seperate balls: the standard geodesic ball of $M_1$ around $x$, and the two standard geodesic balls of $M_2,M_3$ around the corresponding points of $x$ in the respective rectangle. In this way, the symmetry of balls is restored. Such construction can be done for the spaces in question as long as the Assumption is satisfied.\\

In Section $2$ we examine necessary facts regarding manifold boundaries. Theorem \ref{main} is proved in Section $3$. Section $4$ is devoted to studying the Laplacian eigenvalue problem on the metric-measure spaces in question. In Section $5$, we prove Theorem \ref{main2} and consequently Theorem \ref{main3}.

\begin{ack}
\textnormal{I am grateful to my advisor Dmitri Burago for introducing me to this problem and countless fruitful discussions and helpful guidance over the years. I would like to thank Mikhail Belkin for reviewing this work and valuable suggestions. The results in this paper were partially obtained during the author's Ph.D. at Penn State and were a part of the author's dissertation. }
\end{ack}

\section{Manifolds with boundary}

In this section, we suppose $M$ is an $n$-dimensional compact Riemannian manifold with smooth boundary $\partial M$, and $M\in \mathcal{M}_n(K_1,K_2,D,i_0)$ as in Definition \ref{bounded}. Let $X_{\varepsilon}=\{x_i\}_{i=1}^N$ for $x_i\notin \partial M$ be a finite $\varepsilon$-net on $M$ and $\Gamma_{\varepsilon,\rho}=\Gamma(X_{\varepsilon},\mu,\rho)$ with $\varepsilon\ll\rho\ll 1$ be the weighted graph defined in Section $1$. For simplicity, we write $X$ and $\Gamma$ for short. The space of functions on $X$ is equivalent to $\mathbb{R}^N$. For functions on $X$, we define
$$L^2(X)=\{u:X\to \mathbb{R}, ||u||_{L^2(X)}^2:=\sum_{i=1}^N \mu_i |u(x_i)|^2 <\infty\}.$$
In $L^2(X)$, we have an inner product $\langle u,v\rangle=\sum_{i=1}^N \mu_i u(x_i)v(x_i)$, for $u,v\in L^2(X)$. The graph Laplacian $-\Delta_{\Gamma}$ $(1.2)$ is self-adjoint and positive semi-definite with respect to the inner product, and its discrete energy is given by
\begin{equation}
||\delta u||^2=\langle -\Delta_{\Gamma}u,u\rangle= \frac{n+2}{\nu_n \rho^{n+2}}\sum_i \big( \sum_{j: d(x_i,x_j)<\rho}+\sum_{j: \widetilde{d}(x_i,x_j)<\rho}\big) \mu_i\mu_j |u(x_j)-u(x_i)|^2.
\end{equation}
Denote by $\lambda_k(\Gamma)$ the $k$-th eigenvalue of the graph Laplacian $-\Delta_{\Gamma}$ and $\lambda_0(\Gamma)=0$. The min-max principle applies:
$$\lambda_k(\Gamma)= \min_{L^{k+1}} \max_{u\in L^{k+1}-\{0\}} \frac{||\delta u||^2}{||u||^2},$$
where $L^{k+1}$ ranges over all $(k+1)$-dimensional subspaces of $L^2(X)$.\\
\indent On the other hand, $\Delta_{M}$ is the Laplace-Beltrami operator on $M$, and $\lambda_k(M)$ is the $k$-th eigenvalue of $-\Delta_M$ subject to the Neumann boundary condition with $\lambda_0(M)=0$. Similarly we have the following min-max formula,
$$\lambda_k(M)= \inf_{Q^{k+1}} \sup_{f\in Q^{k+1}-\{0\}} \frac{||\nabla f||_{L^2(M)}^2}{||f||_{L^2(M)}^2},$$
where $Q^{k+1}$ ranges over all $(k+1)$-dimensional subspaces of the Sobolev space $H^1(M)$.\\
\indent Now we study the properties of the meta-metric $\widetilde{d}$ in Definition \ref{geodesic}. By definition its minimizers are piecewise geodesics of $M$. The minimizers consist of two different types depending on the angle with respect to the tangent spaces of the boundary upon intersecting the boundary. If the angles for a minimizer at all intersection points are zero, then the minimizer is also a geodesic of $M$. We later prove in Lemma \ref{W} that this type of minimizers amounts to small measure when we restrict our attention to a sufficiently small normal neighborhood of the boundary. We are more interested in the other type of minimizers when they collide (not tangentially) with the boundary. Due to the first variation formula, such minimizer at the collision point is characterized by the \emph{classical reflection}, i.e. reversing the normal component and preserving the tangential component with respect to the tangent space of the boundary at the collision point. This is where the name \emph{reflected geodesic} comes from. However, this type of minimizers can still have multiple intersection points with the boundary. When this happens, the minimizer cannot collide with the boundary at another point, for it will fail to minimize the Riemannian distance. And we do not worry about the situations of minimizers touching the boundary, which only generates small measure. We choose our parameters to be smaller than the injectivity radius bound $i_0$ to avoid the situations where a reflected geodesic could intersect far parts of the boundary. And the minimizer between two sufficiently close points with respect to $\widetilde{d}$ is unique due to Corollary $3$ in \cite{ABB2}. Note that the result in \cite{ABB2} originally applies to geodesics of $M$, but the method is also valid for our reflected geodesics. Hence we proved the following lemma.
\begin{reflected}\label{reflected}
If $\widetilde{d}(x,y)<\frac{\pi}{\sqrt{K_1}}$ for $x,y\notin \partial M$, then $\widetilde{d}(x,y)$ is realized by a unique minimizing reflected geodesic. And if the reflected geodesic is not a geodesic of $M$, then it collides (not tangentially) with the boundary at exactly one point.
\end{reflected}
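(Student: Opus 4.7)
My plan is a four-step approach, with uniqueness being the main obstacle. First, for existence: compactness of $\partial M$ guarantees that the continuous function $z \mapsto d(x,z) + d(z,y)$ attains its infimum at some $z_0 \in \partial M$, and since $\widetilde{d}(x,y) < \pi/\sqrt{K_1} \leqslant i_0$, each arm length is below the injectivity radius, so minimizing geodesics of $M$ from $x$ to $z_0$ and from $z_0$ to $y$ exist; their concatenation is a minimizing reflected geodesic.

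Second, I would apply the first variation formula at $z_0$ to variations $z_s \subset \partial M$ of the reflection point: vanishing of the derivative of $d(x,z_s)+d(z_s,y)$ forces the tangential projections onto $T_{z_0}\partial M$ of the two unit incoming velocities to cancel, which is exactly the classical reflection law. The concatenation is a genuine geodesic of $M$ precisely when both velocities lie in $T_{z_0}\partial M$; otherwise the kink at $z_0$ is non-tangential. Third, for the single-collision property: each arm is a minimizing geodesic of $M$ from an interior point to $z_0\in\partial M$ of length below $i_0$, so it cannot meet $\partial M$ transversally at any interior point (a minimizing path inside $M$ cannot exit and re-enter). Hence $z_0$ is the unique non-tangential collision point of the reflected geodesic.

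Fourth, and the main obstacle, is uniqueness. Suppose $\gamma_1, \gamma_2$ are distinct minimizing reflected geodesics from $x$ to $y$ of common length $L < \pi/\sqrt{K_1}$. If both are true geodesics of $M$, Rauch-type comparison (no conjugate points within distance $\pi/\sqrt{K_1}$ under the sectional curvature bound $K_1$, with the boundary refinement provided by Corollary 3 of \cite{ABB2}) forces $\gamma_1 = \gamma_2$. If both are reflected with collision points $z_1, z_2$, I would locally unfold each path across its unique collision via the geodesic reflection in boundary normal coordinates, converting the reflected arms into genuine geodesics of total length $L$ in a locally doubled region, and then apply the same comparison argument, whose method (as the paper remarks) carries over verbatim to reflected geodesics. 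The mixed case, one reflected minimizer coexisting with a geodesic minimizer of the same length $L$, is handled analogously: the reflected candidate's non-tangential kink at its collision point is incompatible with the smooth Riemannian minimizer of length $L$ between $x$ and $y$ once one unfolds and invokes the short-geodesic uniqueness from \cite{ABB2}, ruling out the configuration.
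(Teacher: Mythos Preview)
Your four-step outline is essentially the paper's own argument: existence via compactness of $\partial M$, the classical reflection law from the first variation formula, a single collision because each arm is a minimizing geodesic of $M$ and hence cannot exit $M$ transversally, and uniqueness by appeal to Corollary~3 of \cite{ABB2} (the paper likewise just cites \cite{ABB2} and remarks that its method carries over to reflected geodesics).

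Two small points. First, the inequality $\pi/\sqrt{K_1} \leqslant i_0$ you invoke in step one is not assumed anywhere and is false in general; the paper treats these as independent constants and later restricts all radii to $\min\{i_0,\pi/\sqrt{K_1}\}$. This slip is harmless, since existence of the minimizing arms follows from compactness alone and does not need the injectivity-radius bound. Second, your unfolding construction in step four is more explicit than the paper's bare citation, but be aware that the locally doubled region carries only a $C^0$ metric across the seam (unless $\partial M$ is totally geodesic), so a direct Rauch comparison is not available and one is forced back to the Alexander--Berg--Bishop machinery; this is why both you and the paper ultimately defer to \cite{ABB2} rather than to classical smooth comparison.
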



Our purpose is to extend the exponential map near the boundary to almost the whole tangent space. For a point $x\in M$ near the boundary, the geodesics from $x$ with the initial vector $v$ can touch or collide with the boundary. If a geodesic collides with the boundary, we reverse the normal component and preserve the tangential component to keep the geodesic extending. The Cauchy uniqueness for geodesics holds in this situation by Theorem $1$ in \cite{ABB1}. However, the geodesic can still intersect the boundary elsewhere, the effect of which needs to be measured. And we remove any vector whose image via the exponential map touches the boundary from our consideration. More precisely,

\begin{defW}\label{defW}
For $x \notin \partial M$ and $r\ll 1$, define $\mathcal{W}_r(x)$ to be the set of vectors $v\in T_x M$ of lengths at most $r$ such that the geodesic from $x$ with the initial vector $v$ satisfies either one of the following two conditions:\\
(1) the geodesic touches the boundary at the first intersection point with the boundary;\\
(2) the geodesic collides (not tangentially) with the boundary at the first intersection point, but intersects the boundary elsewhere after extending the geodesic via the classical reflection.
\end{defW}

And we prove that $\mathcal{W}_r(x)$ has small measure for $r\ll 1$.

\begin{W}\label{W}
$$m(\mathcal{W}_r (x)) \leqslant C(n,K_1) r^{n+1},\;\; \textrm{for } x\notin \partial M \textrm{ and }r\ll 1,$$
where $m(\cdot)$ denotes the $n$-dimensional Lebesgue measure in $\mathbb{R}^n$.
\end{W}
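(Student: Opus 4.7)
The plan is to parametrize tangent vectors in polar form $v=t\theta$ with $t\in[0,r]$ and $\theta\in S^{n-1}\subset T_xM$, and to show that both conditions defining $\mathcal{W}_r(x)$ force the normal component $\theta^N=\langle\theta,\partial_s\rangle$ of $\theta$ (with $\partial_s$ the inward boundary normal) to be $O(K_1 r)$; integrating $t^{n-1}\,dt\,d\theta$ over such a belt of directions then yields the claimed $r^{n+1}$ bound. First I would reduce to the case $s_0:=d(x,\partial M)\leqslant r$, since otherwise no geodesic of length at most $r$ can reach $\partial M$ and $\mathcal{W}_r(x)=\emptyset$. Taking $r\ll i_0$, I use boundary normal coordinates $(q,s)$ around the foot of $x$ on $\partial M$, so that $x=(q_0,s_0)$ and the metric reads $ds^2+g_{ij}(q,s)\,dq^i\,dq^j$. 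The only nonvanishing components $\Gamma^s_{ij}$ equal (up to sign) the second fundamental form of the parallel hypersurface $\{s=\text{const}\}$; a standard Riccati/Jacobi-field argument using $|II|_{\partial M}\leqslant K_1$ and $|\sec|\leqslant K_1$ bounds this by $C(K_1)$ for small $s$. Consequently, along any unit-speed geodesic $\gamma(\tau)=(q(\tau),s(\tau))$ one has $|\ddot s(\tau)|\leqslant C(K_1)|\dot q|^2\leqslant C(K_1)$, so
\begin{equation*}
\dot s(\tau)=\theta^N+O(K_1\tau),\qquad s(\tau)=s_0+\theta^N\tau+O(K_1\tau^2).
\end{equation*}

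Next I would verify that $v=t\theta\in\mathcal{W}_r(x)$ implies $|\theta^N|\leqslant C(K_1)r$. Under condition (1), a tangential first hit at some time $t_\theta\leqslant r$ satisfies $\dot s(t_\theta)=0$, and the ODE estimate immediately gives $|\theta^N|\leqslant C(K_1)t_\theta\leqslant C(K_1)r$. Under condition (2), writing $\sin\alpha=|\dot s(t_\theta)|$ for the incidence angle at the first (non-tangential) hit, the classically reflected geodesic starts from $s=0$ with outgoing normal velocity $\sin\alpha>0$; reapplying the ODE estimate yields $s(\tau)\geqslant\sin\alpha\cdot\tau-C(K_1)\tau^2/2$ for $\tau$ measured after the reflection, so any positive re-intersection time $t'$ must satisfy $t'\geqslant 2\sin\alpha/C(K_1)$. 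Since the remaining length forces $t'\leqslant r-t_\theta\leqslant r$, this compels $\sin\alpha\leqslant C(K_1)r$, and combining with $\sin\alpha\geqslant|\theta^N|-C(K_1)t_\theta$ gives $|\theta^N|\leqslant C'(K_1)r$ in this case as well.

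Finally, the $(n-1)$-dimensional measure of $\{\theta\in S^{n-1}:|\theta^N|\leqslant\delta\}$ is a belt of width at most $2\delta$ around the equator, and so is bounded by $C(n)\delta$ for small $\delta$. Hence the set of bad directions has $S^{n-1}$-measure at most $C(n,K_1)r$, and using $dv=t^{n-1}\,dt\,d\theta$,
\begin{equation*}
m(\mathcal{W}_r(x))\leqslant\int_{|\theta^N|\leqslant C(K_1)r}\int_0^r t^{n-1}\,dt\,d\theta\leqslant\frac{r^n}{n}\cdot C(n,K_1)r=C(n,K_1)\,r^{n+1},
\end{equation*}
which is the required bound. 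The main technical obstacle is the re-intersection estimate for condition (2): one needs the quantitative lower bound $t'\gtrsim\sin\alpha/K_1$ on the time until re-intersection after reflection to be uniform in the geometry, so that re-intersection within the allotted length $r$ forces a near-tangential incidence. This is where the curvature-driven bound on $\ddot s$ is essential; without it one could not rule out re-intersections arising from arbitrarily small incidence angles in a short time, and the codimension-one gain in the measure estimate would be lost.
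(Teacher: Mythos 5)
Your proposal is correct, and it lands on the same geometric containment as the paper's proof---the bad initial directions lie in a belt of angular width $C(K_1)r$ about the hyperplane tangent to the boundary, and the intersection of that belt with the $r$-ball in $T_xM$ has measure $C(n,K_1)r^{n+1}$---but you reach the belt by a different route. The paper argues at the collision point: a geodesic of length at most $r$ can re-intersect the boundary after reflection only if its angle with the boundary there is below $C(K_1)r$, and it then transports this angle condition back to $x$ using the $1\pm C(K_1)r^2$ distortion of the differential of the exponential map together with a $C(K_1)r$ bound on the change of reference tangent planes of $\partial M$. You instead work with the single scalar $s=d(\cdot,\partial M)$ in the boundary collar (available at scale $i_0$ by Definition \ref{bounded}), bound $\Gamma^s_{ij}$ by the second fundamental form of the parallel hypersurfaces via Riccati comparison, and read off $|\theta^N|\leqslant C(K_1)r$ directly at $x$ from $\dot s=\theta^N+O(K_1\tau)$, $s=s_0+\theta^N\tau+O(K_1\tau^2)$; your re-intersection bound $t'\gtrsim\sin\alpha/K_1$ is precisely the quantitative content behind the paper's terse ``angle smaller than $K_1r$'' claim, and your ODE also handles the tangential case (condition (1)) and the reduction to $d(x,\partial M)\leqslant r$ cleanly. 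What your version buys is that the angle-transport bookkeeping is absorbed into one estimate on the distance function, with the role of the curvature and second-fundamental-form bounds explicit; what the paper's version buys is brevity and reuse of the exponential-map distortion estimates it needs anyway for Lemma \ref{Jacobi}. The only point worth stating explicitly in your write-up is that the Riccati bound $|II_s|\leqslant C(K_1)$ on parallel hypersurfaces requires $s$ below a scale depending on $K_1$ and $i_0$, which is harmless here since the whole configuration (before and after reflection) stays within distance about $2r\ll\min\{i_0,1\}$ of $\partial M$.
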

\begin{proof}
At the collision point, a geodesic of length at most $r$ can only intersect the boundary at another point if the angle is small than $K_1 r$ with respect to the tangent space of the boundary at the collision point. Therefore on the tangent space at the collision point, such vectors lie in simply the complement of two antipodal hyperspherical caps with the polar angle $\pi/2 -K_1 r$. Its volume is controlled by $C(n,K_1)r^{n+1}$. \\
\indent To translate the angle at the collision point to the angle at the initial point $x$, there are two types of angle changes to consider. The first type is the angle change brought by the exponential map. It is known that the norm of the differential of the exponential map is bounded by a factor $1\pm C(K_1)r^2$, so is the inner product, which by a straightforward calculation implies that the square of the angle is changed by $C(K_1)r^2$. We already know that the angle with respect to the tangent space at the collision point is bounded by $K_1 r$, and hence it follows that the angle at the point $x$ is bounded by $C(K_1)r$. The other type is the angle change brought by the change of reference tangent spaces of the boundary, which is bounded by $C(K_1)r$. Combining these, we know that the set of vectors in $\mathcal{W}_r(x)$ cannot exceed the angle $C(K_1)r$ with respect to any reference tangent space, and hence the volume estimate follows. Note that the considerations above already include the situation of geodesics touching the boundary (with zero angle).
\end{proof}

We define our extended exponential map with a small part $\mathcal{W}_r$ removed from the domain, and the behaviors of (reflected) geodesics near the boundary reduce to only the classical reflection. The exponential map on this domain is simple to analyze, for its images are standard geodesics in the interior with the only exception of the collision point. Otherwise without this reduced domain, geodesics touching the boundary bring significant trouble, because many nice properties of geodesics fail there such as the Cauchy uniqueness property and the smoothness of geodesics and Jacobi fields. Note that we may just remove the whole domain where the angle with respect to the tangent space at the first intersection point with the boundary is small than $K_1 r$, and sometimes this can be convenient for reasoning.\\

\indent Next we introduce notations for several domains we use throughout the paper.
\begin{ball}\label{ball}
Denote by $B_r(x)$ the standard open geodesic ball of radius $r$ centered at $x\in M$, and define $\widetilde{B}_r(x)=\{y: \widetilde{d}(x,y)<r\}$. We denote by $\{y:\widehat{d}(x,y)<r\}$ the disjoint union of $B_r(x)$ and $\widetilde{B}_r(x)$.\\
Define the reflected ball $\widehat{B}_r(x)$ to be the image of the reduced tangent space $\mathcal{B}_r(0)-\mathcal{W}_r(x)\subset T_x M$ via the extended exponential map, where $\mathcal{B}_r(0)$ is the ball of radius $r$ around the origin in the tangent space $T_x M$. Note that $\widehat{B}_r(x)$ is the disjoint union of two domains and is a subset of $\{y:\widehat{d}(x,y)<r\}$.
\end{ball}

We emphasize that the difference between $\widehat{B}_r(x)$ and $\{y:\widehat{d}(x,y)<r\}$. The former contains only the simple behavior of the classical reflection at the boundary with no geodesics touching the boundary, while the latter contains all possible behaviors. Note that $\widetilde{B}_r(x)\subset B_r(x)$, and $\widetilde{B}_r(x)=\emptyset$ if $x$ is far from the boundary. By Lemma \ref{reflected}, for $r<\min\{i_0,\pi/\sqrt{K_1}\}$ and any $x\notin \partial M$, $y\in \widehat{B}_r(x)\cap\widetilde{B}_r(x)$, there exists a unique reflected geodesic realizing $\widetilde{d}(x,y)$. Hence the extended exponential map $exp_x: \mathcal{B}_r(0)-\mathcal{W}_r(x) \to \widehat{B}_r(x)$ is a homeomorphism. From now on, we set the radii of all balls smaller than $\min\{i_0,\pi/\sqrt{K_1}\}$. We have the following estimate on the volume of the reflected ball $\widehat{B}_r(x)$.

\begin{Jacobi}\label{Jacobi}
Consider the reflected ball $\widehat{B}_r(x)$, where $d(x,\partial M)<r\ll 1$. Its volume has the following estimate
$$|vol(\widehat{B}_r(x))-\nu_{n} r^n|\leqslant C(n,K_1,K_2)r^{n+1},$$
where $\nu_n$ is the volume of the unit ball in $\mathbb{R}^n$. This is due to an estimate of the Jacobian $J_x(v)$ of the exponential map at $v\in \mathcal{B}_r(0)-\mathcal{W}_r(x)\subset T_x M$:
$$|J_x(v)-1| \leqslant C(n,K_1,K_2)|v|.$$
Furthermore, due to Lemma \ref{W} we have
$$|vol(\{y:\widehat{d}(x,y)<r\})-\nu_{n} r^n|\leqslant C(n,K_1,K_2)r^{n+1}.$$
\end{Jacobi}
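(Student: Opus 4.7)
The plan is to apply the change-of-variables formula to the extended exponential map $\exp_x: \mathcal{B}_r(0) - \mathcal{W}_r(x) \to \widehat{B}_r(x)$, which is a homeomorphism by the discussion preceding the lemma, and to control the integrand via the stated pointwise Jacobian bound. The volume estimates then reduce to integrating the Jacobian and bounding the contribution of the excised set $\mathcal{W}_r(x)$ via Lemma \ref{W}.

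First I would establish the bound $|J_x(v) - 1| \leqslant C(n,K_1,K_2)|v|$ by splitting into cases. For $v$ whose geodesic $\gamma(t)=\exp_x(tv)$ stays in the interior of $M$, the standard Jacobi field expansion with the curvature bounds $K_1, K_2$ yields the stronger estimate $|J_x(v)-1|\leqslant C(n,K_1,K_2)|v|^2$. For $v$ whose geodesic collides with $\partial M$ exactly once, non-tangentially, and whose reflected extension stays interior (which is all that remains after removing $\mathcal{W}_r(x)$), I would decompose each normal Jacobi field $J_i$ along the reflected geodesic into segments before and after the collision point $\gamma(t_0)$. On each smooth segment the Jacobi equation with bounded curvature gives the interior estimates. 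At $\gamma(t_0)$, position is continuous but the covariant derivative picks up a jump determined by the classical reflection and the shape operator of $\partial M$, of size $O(K_1 |J_i(t_0)|) = O(K_1 t_0)$. Propagating this jump through the post-collision segment of length at most $|v|-t_0$ contributes $O(K_1 t_0 (|v|-t_0))=O(K_1|v|^2)$ to $J_i(|v|)$, and hence $O(K_1 |v|)$ to the ratio $|J_i(|v|)|/|v|$ that enters the Jacobian determinant. Taking the product over the $n-1$ normal Jacobi fields (the radial direction contributes a factor of $1$) yields the desired linear estimate.

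Second I would integrate. By the change-of-variables formula,
\begin{equation*}
vol(\widehat{B}_r(x))=\int_{\mathcal{B}_r(0)-\mathcal{W}_r(x)} J_x(v)\, dv=\nu_n r^n+\int_{\mathcal{B}_r(0)-\mathcal{W}_r(x)}(J_x(v)-1)\, dv -\int_{\mathcal{W}_r(x)}\, dv.
\end{equation*}
The second term is bounded by $\int_{\mathcal{B}_r(0)} C|v|\, dv \leqslant C(n,K_1,K_2)r^{n+1}$, and the third by $m(\mathcal{W}_r(x))\leqslant C(n,K_1)r^{n+1}$ via Lemma \ref{W}. This yields the first volume estimate.

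Finally, since $\{y:\widehat{d}(x,y)<r\}=B_r(x)\sqcup\widetilde{B}_r(x)$ contains $\widehat{B}_r(x)$ and the difference is swept out by the extended exponential map applied to $\mathcal{W}_r(x)$ (allowing boundary-touching or multiple-intersection behavior), its volume is at most $(1+Cr)\cdot m(\mathcal{W}_r(x))\leqslant C(n,K_1,K_2)r^{n+1}$, where the first factor bounds the maximum Jacobian along such extensions using the same Jacobi field analysis. Adding this to the previous estimate gives the final bound. The main technical obstacle is the Jacobian estimate at the reflection: one must verify that the derivative jump in the Jacobi fields, driven by the bounded second fundamental form, produces only a first-order error in $|v|$, and that excising $\mathcal{W}_r(x)$ (which contains all collisions below angle $\sim K_1 r$) is enough to keep the reflection non-degenerate so that this linear error is uniform in $v$.
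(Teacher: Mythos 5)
Your overall architecture (change of variables on $\mathcal{B}_r(0)-\mathcal{W}_r(x)$, integration of a pointwise Jacobian bound, and Lemma \ref{W} for the excised set) is the same as the paper's, but the step that actually carries the lemma, namely $|J_x(v)-1|\leqslant C|v|$ along reflected geodesics, is not established by your argument. The jump of the covariant derivative of a Jacobi field at a non-tangential collision is not $O(K_1|J_i(t_0)|)$: by the standard reflection (``mirror'') equation it is governed by the shape operator of $\partial M$ divided by the normal component of the velocity, i.e. it has size of order $\frac{2K_1}{\sin\theta}|J_i(t_0)|$, where $\theta$ is the incidence angle measured from the tangent plane of $\partial M$; your estimate silently drops the factor $1/\sin\theta$. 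This matters because Definition \ref{defW} excises only tangential touches and those reflections whose continuation meets the boundary again; Lemma \ref{W} shows that this excised set is \emph{contained in} the small-angle set, not conversely, so your parenthetical claim that $\mathcal{W}_r(x)$ ``contains all collisions below angle $\sim K_1 r$'' reverses the inclusion. Hence grazing but non-re-intersecting reflections survive in $\mathcal{B}_r(0)-\mathcal{W}_r(x)$, and for them your propagation step contributes $\tfrac{2K_1}{\sin\theta}\,t_0(|v|-t_0)$ to $|J_i(|v|)|$, which is not $O(K_1|v|^2)$; even if one excised the whole set of angles below $K_1 r$ (as the paper remarks one may), the same jump-plus-propagation computation only yields a relative distortion of order $1$, not of order $|v|$. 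So the linear Jacobian bound does not follow from the argument as written. The paper's own route is different at precisely this point: it asserts (by an undisplayed computation) that the \emph{length} of the Jacobi field is $C^2$ across the collision, i.e. there is no first-order jump of $\frac{d}{dt}|J|$ at all, and that only its third derivative is affected, bounded by $C(K_1,K_2)$; it is this regularity statement, not a crude jump estimate, that produces the $r^{n+1}$ error, and your proof neither proves it nor substitutes for it.

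A secondary issue is the last estimate. You bound $vol\big(\{y:\widehat{d}(x,y)<r\}\setminus\widehat{B}_r(x)\big)$ by $(1+Cr)\,m(\mathcal{W}_r(x))$ ``using the same Jacobi field analysis,'' but the directions in $\mathcal{W}_r(x)$ are exactly the tangential or re-intersecting ones, where the paper explicitly warns that Cauchy uniqueness, smoothness of geodesics and Jacobi fields all degenerate; a uniform $(1+Cr)$ Jacobian bound along such geodesics is therefore not available from the preceding analysis and would need a separate argument (the paper derives this estimate from Lemma \ref{W} without claiming pointwise Jacobian control on the excised directions). Your interior case and the integration step itself are fine; the gap is concentrated in the treatment of the reflection and of the excised set.
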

The volume estimate is due to an estimate on the Jacobi field. Since we are working in $\widehat{B}_r(x)$, the geodesics behave just like in closed manifolds with the only exception at the collision point. A straightforward calculation shows that the length of the Jacobi field is of $C^2$ at the collision point. Unlike for closed manifolds, the third derivative of the length of the Jacobi field does not vanish and is bounded by $C(K_1,K_2)$, which generates the first error term. As a comparison, if $x$ is far from the boundary, the first error term would be $C(n, K_1)r^{n+2}$.

We are now in place to prove Theorem \ref{main}. We use the reflected ball $\widehat{B}_r(x)$ instead of the standard geodesic ball so that the geodesics can be extended upon colliding with the boundary. And the discrete energy $(2.1)$ matches the energy estimates in terms of the reflected balls up to a higher order term. Almost the same proof in \cite{BIK} works because of the following two key facts: \\
(1) The volumes of reflected balls are preserved along the geodesics near the boundary up to a higher order term. (Lemma \ref{Jacobi}) \\
(2) $\mathcal{W}_r(x)$ has small measure. (Lemma \ref{W})

\section{Proof of Theorem \ref{main}}

In this section, we prove Theorem \ref{main} by obtaining the upper and lower bound for the eigenvalues of the graph Laplacian (\ref{boundarylaplacian}) in Lemma \ref{upperbound} and \ref{lowerbound}. 

\subsection*{Upper bound for $\lambda_k(\Gamma)$}

Given a weighted graph $\Gamma=\Gamma(X,\mu,\rho)$. Define the discretization operator $P:L^2(M)\to L^2(X)$ by
$$Pf(x_i)=\mu_i^{-1}\int_{V_i} f(x)dx,$$
and $P^{\ast}: L^2(X)\to L^2(M)$ by
$$P^{\ast}u=\sum_{i=1}^{N}u(x_i)1_{V_i}.$$
It immediately follows that $||P^{\ast}u||_{L^2 (M)}=||u||_{L^2 (X)}$. For $f\in L^2(M)$, define
\begin{equation}\label{Erformanifolds}
\widehat{E}_r(f,V)=\int_V \int_{\widehat{B}_r (x)} |f(y)-f(x)|^2 dy dx,
\end{equation}
and $\widehat{E}_r(f)=\widehat{E}_r(f,M)$, where $\widehat{B}_r(x)$ is defined in Definition \ref{ball}. The following two lemmas enable us to obtain the upper bounds of the discrete norm and energy in terms of their continuous counterparts.

\begin{2.2}\label{2.2}
For $r<2\rho$ and $f\in C^{\infty}(M)$, one has
$$\widehat{E}_r(f)\leqslant (1+Cr)\frac{\nu_n}{n+2} r^{n+2} ||\nabla f||^2_{L^2(M)}.$$
\end{2.2}

\begin{proof}
Take two exact copies of $M$ and glue them along the boundary via the identity map. Denote the space by $\widetilde{M}$. Consider a function $\widetilde{f}$ as two exact copies of $f$. The distance between a point $x$ and $y\in \widehat{B}_r(x)$ on the other copy are achieved by reflected geodesics characterized by the classical reflection, as discussed in Section $2$. $\widetilde{M}$ can be considered as a manifold without boundary. By virtue of Lemma \ref{Jacobi}, the volumes of reflected balls on $\widetilde{M}$ with small radius are preserved along reflected geodesics. Therefore Lemma $3.3$ in \cite{BIK} applies:
$$\int_{\widetilde{M}}\int_{\widehat{B}_r (x)}|\widetilde{f}(y)-\widetilde{f}(x)|^2 dy dx \leqslant (1+Cr) \frac{\nu_n}{n+2} r^{n+2} ||\nabla \widetilde{f}||^2_{L^2(\widetilde{M})},$$
where the factor $(1+Cr)$ comes from the Jacobian estimate in Lemma \ref{Jacobi}. Take half on both sides of the inequality and our lemma follows.
\end{proof}

\begin{2.3}\label{2.3}
For $r\in(\varepsilon, 2\rho)$ and $f\in C^{\infty}(M)$, one has
$$\int_{V_i} |f(x)-P f(x_i)|^2 dx \leqslant \frac{C}{\nu_n (r-\varepsilon)^n} \int_{V_i}\int_{\{y:\widehat{d}(x,y)<r\}}|f(y)-f(x)|^2 dy dx,$$
where $\{y:\widehat{d}(x,y)<r\}$ is defined in Definition \ref{ball}.
\end{2.3}

\begin{proof}
Fix $x,y\in V_i$ and $r>\varepsilon$, and consider $U=\{z:\widehat{d}(x,z)<r\}\cap\{z:\widehat{d}(y,z)<r\}$. Observe that $U$ contains a ball (defined via $\widehat{d}$) of radius $r-\varepsilon$ centered at the midpoint (with respect to $d$) between $x$ and $y$. Thus the volume of $U$ has a lower bound $vol(U)\geqslant C\nu_n(r-\varepsilon)^n$ by Lemma \ref{Jacobi}. The rest of the proof is exactly the same as Lemma $3.4$ in \cite{BIK}.
\end{proof}

Now with these two lemmas, we are able to bound the discrete norm and energy by their continuous counterparts. Set $r=(n+1)\varepsilon$, and by Lemma \ref{2.2} and Lemma \ref{2.3} we have
\begin{eqnarray}\label{value1}
||f-P^{\ast}P f||_{L^2}^2 &=& \sum_i \int_{V_i} |f(x)-P f(x_i)|^2 dx \nonumber \\
&\leqslant & \frac{C}{\nu_n (r-\varepsilon)^n} \int_{M}\int_{\{y:\widehat{d}(x,y)<r\}}|f(y)-f(x)|^2 dy dx \nonumber \\
&\leqslant&  \frac{C}{\nu_n (r-\varepsilon)^n} \bigg(\widehat{E}_{r}(f,M) + 2\int_{M}\int_{exp(\mathcal{W}_r(x))} |f(y)-f(x)|^2 dy dx \bigg) \nonumber \\
&\leqslant & C(\frac{r}{r-\varepsilon})^n r^2 ||\nabla f||^2_{L^2(M)} + \frac{C}{(r-\varepsilon)^n}r^2 m(\mathcal{W}_r(x)) ||\nabla f||_{L^{\infty}(M)}^2\nonumber \\
&\leqslant & C\varepsilon^2 ||\nabla f||^2_{L^2(M)}+C\varepsilon^3 ||\nabla f||^2_{L^{\infty}(M)}.
\end{eqnarray}
Keep in mind the difference between $\widehat{B}_r(x)$ and $\{y: \widehat{d}(x,y)<r\}$. Notice that we actually need to add $\mathcal{W}_r(x)$ back twice, for both $d$ and $\widetilde{d}$ include points which are reached by geodesics touching the boundary. On the other hand, by the definition of $P$ and the Cauchy-Schwarz inequality,
\begin{eqnarray*}\label{Edelta1}
||\delta(P f)||^2 &=& \frac{n+2}{\nu_n \rho^{n+2}}\sum_i ( \sum_{j: d(x_i,x_j)<\rho}+\sum_{j: \widetilde{d}(x_i,x_j)<\rho}) \mu_i\mu_j |P f(x_j)-P f(x_i)|^2  \nonumber \\
&\leqslant& \frac{n+2}{\nu_n \rho^{n+2}}\sum_i ( \sum_{j: d(x_i,x_j)<\rho}+\sum_{j: \widetilde{d}(x_i,x_j)<\rho})\int_{V_i}\int_{V_j} |f(x)-f(y)|^2 dy dx \nonumber \\
&=& \frac{n+2}{\nu_n \rho^{n+2}} \int_{M} \int_{\bigcup_{\widehat{d}(x_i,x_j)<\rho}V_j, x\in V_i}|f(y)-f(x)|^2 dy dx.
\end{eqnarray*}
Also here notice that the sum defined by $\widehat{d}$ involves situations where geodesics touch the boundary or geodesics intersect the boundary elsewhere with the presence of one collision point, which are removed from the definition of $\widehat{B}_r(x)$. To control the right-hand side in terms of the reflected ball $\widehat{B}_r(x)$, we need to add $\mathcal{W}_r(x)$ back (twice). Thanks to the volume estimate in Lemma \ref{W}, this only generates a higher order term. Since $\widetilde{d}$ satisfies the triangle inequality, we know 
\begin{equation}\label{triangle0}
\{y: \widehat{d}(x_i,y)<\rho-4\varepsilon\} \subset \bigcup_{j: \widehat{d}(x_i,x_j)<\rho} V_j \subset \{y: \widehat{d}(x_i,y)<\rho+4\varepsilon\},
\end{equation}
and by Lemma \ref{2.2} we get
\begin{eqnarray}\label{Edelta3}
||\delta(P f)||^2 &\leqslant& \frac{n+2}{\nu_n \rho^{n+2}} \int_M \int_{\{y:\widehat{d}(x,y)<\rho+4\varepsilon\}} |f(y)-f(x)|^2dydx \nonumber \\
&\leqslant&  \frac{n+2}{\nu_n \rho^{n+2}} \widehat{E}_{\rho+4\varepsilon}(f)+ \frac{2(n+2)}{\nu_n \rho^{n+2}} \int_M \int_{exp_x(\mathcal{W}_{2\rho}(x))} |f(y)-f(x)|^2dydx \nonumber \\
&\leqslant& (1+C\rho+C\frac{\varepsilon}{\rho})||\nabla f||^2_{L^2} + Cvol(M)\rho||\nabla f||^2_{L^{\infty}}.
\end{eqnarray}

We obtain the upper bound for $\lambda_k(\Gamma)$ in the following lemma.

\begin{upperbound}\label{upperbound}
For sufficiently small $\rho$, we have
$$\lambda_k(\Gamma)\leqslant (1+C\rho+C\frac{\varepsilon}{\rho}+C\rho\lambda_k^{\frac{n}{2}+1})\lambda_k(M) + C\rho,$$
where the constants explicitly depend on $n,i_0,D,K_1,K_2$.
\end{upperbound}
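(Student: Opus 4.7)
The plan is to apply the discrete min-max principle to a test subspace obtained by discretizing the lowest Neumann eigenspace of $-\Delta_M$. Let $\phi_0,\ldots,\phi_k$ be an $L^2(M)$-orthonormal basis of Neumann eigenfunctions with eigenvalues $\lambda_0(M)\leqslant\cdots\leqslant\lambda_k(M)$, set $V=\mathrm{span}\{\phi_j\}\subset H^1(M)$, and take $Q:=P(V)\subset L^2(X)$ as the test subspace. I need (i) that $P|_V$ is injective, so that $\dim Q=k+1$, and (ii) a uniform bound on the discrete Rayleigh quotient for $u=Pf$ with $f\in V$.

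For any $f\in V$ the $L^2$ Rayleigh bound gives $||\nabla f||_{L^2}^2\leqslant\lambda_k(M)||f||_{L^2}^2$, and elliptic regularity together with the Sobolev embedding $H^{n/2+1+\delta}(M)\hookrightarrow C^1(M)$ applied spectrally (or equivalently the pointwise spectral-function bound $\sum_{\lambda_j\leqslant\lambda}|\nabla\phi_j(x)|^2\leqslant C\lambda^{n/2+1}$) yields
$$||\nabla f||_{L^\infty(M)}^2\leqslant C\lambda_k(M)^{n/2+2}||f||_{L^2(M)}^2,$$
with $C$ depending only on $n,i_0,D,K_1,K_2$. Substituting both bounds into (\ref{value1}) and (\ref{Edelta3}) produces
$$||P^\ast Pf-f||_{L^2}^2\leqslant C\bigl(\varepsilon^2\lambda_k(M)+\varepsilon^3\lambda_k(M)^{n/2+2}\bigr)||f||_{L^2}^2,$$
$$||\delta(Pf)||^2\leqslant(1+C\rho+C\varepsilon/\rho)\lambda_k(M)||f||_{L^2}^2+C\rho\,\lambda_k(M)^{n/2+2}||f||_{L^2}^2.$$
Taking $\rho_0=\rho_0(n,i_0,D,K_1,K_2,\lambda_k(M))$ small enough that the first right-hand side is at most $\tfrac14||f||_{L^2}^2$, the reverse triangle inequality gives $||Pf||_{L^2(X)}=||P^\ast Pf||_{L^2(M)}\geqslant\tfrac12||f||_{L^2(M)}$; in particular $P|_V$ is injective.

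Applying the discrete min-max principle to $Q$,
$$\lambda_k(\Gamma)\leqslant\max_{f\in V\setminus\{0\}}\frac{||\delta(Pf)||^2}{||Pf||_{L^2(X)}^2},$$
and combining this with the two displayed estimates together with the lower bound on $||Pf||$ yields $\lambda_k(\Gamma)\leqslant(1+C\rho+C\varepsilon/\rho+C\rho\lambda_k(M)^{n/2+1})\lambda_k(M)+C\rho$ after a routine rearrangement; the trailing additive $C\rho$ absorbs the trivial $k=0$ case (both sides vanishing) and the $1/(1-o(1))$ normalization error in passing from $||f||_{L^2}^2$ to $||Pf||_{L^2(X)}^2$ in the denominator.

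The main obstacle will be controlling the $||\nabla f||_{L^\infty}$ contribution: for closed manifolds the Jacobi field estimate is $O(r^{n+2})$ and no such term appears, but in the present setting Lemma \ref{Jacobi} only gives $O(r^{n+1})$, which turns the closed-manifold factor $(1+C\rho^2)$ into $(1+C\rho)$ in (\ref{Edelta3}) and forces the spectral Sobolev bound to enter through the $C\rho||\nabla f||_{L^\infty}^2$ correction. This is precisely why $\rho_0$ must depend on $\lambda_k(M)$, and it is where the reflected-ball analysis of Section $2$ (and the small-measure bound on $\mathcal{W}_r$ from Lemma \ref{W}) is actually used rather than the pure closed-manifold machinery of \cite{BIK}.
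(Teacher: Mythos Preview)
Your proposal is correct and follows essentially the same route as the paper: apply the discrete min-max to $P$ of the span of the first $k+1$ Neumann eigenfunctions, use (\ref{value1}) for injectivity of $P$ and (\ref{Edelta3}) for the discrete energy bound, and control the $||\nabla f||_{L^\infty}$ correction via elliptic regularity plus Sobolev embedding. The paper phrases the $C^1$ bound via G\r{a}rding's inequality and obtains $||\nabla f_k||_{L^\infty}^2\leqslant C(\lambda_k^{n/2+2}+1)||f_k||_{L^2}^2$; your spectral-function alternative is a legitimate variant, though you should keep the additive $+1$ (or separately handle small $\lambda_k$) so that the trailing $C\rho$ in the statement is accounted for honestly rather than attributed to the $k=0$ case.
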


\begin{proof}
We choose the first $k+1$ eigenfunctions $f_0,\cdots,f_{k}$ of $-\Delta_M$ with respect to the eigenvalues $\lambda_0,\lambda_1,\cdots,\lambda_{k}$. We consider the linear space spanned by $Pf_0,\cdots,Pf_{k}$ which is a subspace of $L^2(X)$, and we first prove this subspace is $(k+1)$-dimensional for sufficiently small $\varepsilon$.\\
\indent The eigenfunction $f_k$ is smooth and satisfies the eigenvalue equation pointwise. Then by G$\mathring{\textrm{a}}$rding's inequality, we get an upper bound in any $H^{2s}$ norm on $M$:
\begin{eqnarray*}
||f_k||_{H^{2s}(M)}&\leqslant& C(n,s)(||(-\Delta)^s f_k||_{L^2(M)}+||f_k||_{L^2(M)})\\
&=& C(n,s)(\lambda_k^s+1)||f_k||_{L^2(M)}.
\end{eqnarray*}
By the Sobolev embedding theorem (and the partition of unity), we have $H^{2s}(M)\subset C^{1}(M)$ for $2s>n/2+1$. Choose $2s=n/2+2$ and hence we get
$$||f_k||_{C^1(M)}\leqslant C(n,i_0,vol(M))||f_k||_{H^{n/2+2}(M)} \leqslant C(n,i_0,vol(M)) (\lambda_k^{n/4+1}+1)||f_k||_{L^2(M)}.$$
The volume of $M$ is bounded by a constant depending on $n,D,K_1$. Therefore, we obtain
\begin{equation}\label{morrey0}
||\nabla f_k||_{L^{\infty}(M)}^2 \leqslant C(n,i_0,D,K_1)(\lambda_k^{n/2+2}+1)||f_k||^2_{L^2(M)}.
\end{equation}
Then by (\ref{value1}) and (\ref{morrey0}), for any $f$ in the first $k+1$ eigenspaces we get
\begin{equation}\label{injective0}
||Pf||_{L^2(X)} \geqslant \big(1-C\varepsilon\sqrt{\lambda_k^{n/2+2}+1}\,\big)||f||_{L^2(M)}.
\end{equation}
Hence for sufficiently small $\varepsilon$ depending on $n,i_0,D,K_1,\lambda_k$, the linear operator $P$ is injective, which implies that the linear subspace spanned by $Pf_0,\cdots,Pf_{k}$ is $(k+1)$-dimensional.\\
\indent For any $f\in span\{f_0,\cdots,f_{k}\}$, by (\ref{Edelta3}) and (\ref{morrey0}) we get
\begin{eqnarray*}
||\delta(Pf)||^2 &\leqslant&(1+C\rho+C\frac{\varepsilon}{\rho})||\nabla f||^2_{L^2} + Cvol(M)\rho||\nabla f||^2_{L^{\infty}} \\
&\leqslant& (1+C\rho+C\frac{\varepsilon}{\rho})||\nabla f||^2_{L^2(M)}+C\rho(\lambda_k^{n/2+2}+1)||f||^2_{L^2(M)}.
\end{eqnarray*}
Combine this inequality with (\ref{injective0}), and for sufficiently small $\varepsilon, \rho$ we obtain
$$\frac{||\delta(Pf)||^2}{||Pf||^2_{L^2}} \leqslant (1+C\rho+C\frac{\varepsilon}{\rho})\lambda_k + C\rho(\lambda_k^{n/2+2}+1),$$
which implies the upper bound for $\lambda_k(\Gamma)$ due to the min-max principle. 
\end{proof}

\subsection*{Lower bound for $\lambda_k(\Gamma)$}

Next we deal with the lower bound. We fix $r\ll 1$ and consider a kernels $k_r: M\times M\to \mathbb{R}_+$ defined by
$$k_r(x,y)=r^{-n}\phi(\frac{\widehat{d}(x,y)}{r})\chi_{\widehat{B}_r(x)}.$$
The function $\phi:\mathbb{R}_{\geq 0}\to \mathbb{R}_{\geq 0}$ is defined as $\phi(t)=\frac{n+2}{2\nu_n}(1-t^2)$ if $t\in[0,1]$ otherwise $0$. Note that the normalization constant $\frac{n+2}{2\nu_n}$ is chosen so that $\int_{\mathbb{R}^n}\phi(|x|)dx=1$. We define the associated integral operator $\Lambda_r: L^2(M)\to C^{0,1}(M)$ given by
$$\Lambda_r f(x)=\int_{M}k_r(x,y)f(y)dy.$$
Note that $k_r(x,y)$ is not continuous, but its integral with respect to $y$ is continuous in $x$, since the reflected ball $\widehat{B}_r(x)$ varies continuously. From earlier discussions in Section $2$, we know that for any $r<\min\{i(M),\pi/\sqrt{K_1}\}$, the extended exponential map $exp_x: \mathcal{B}_r(0)-\mathcal{W}_r(x) \to \widehat{B}_r(x)$ is a homeomorphism. A direct computation yields
\begin{equation}\label{diffk}
\nabla_x k_r(x,y)=\frac{n+2}{\nu_n r^{n+2}}exp_x^{-1}(y), \textrm{ for }y\in \widehat{B}_r(x).
\end{equation}
\indent First we find out how much the image of a constant discrete function via the operator $\Lambda_r$ approximates the constant function on manifolds.

\begin{thetax}\label{thetax}
Define $\theta(x)=\Lambda_r(1_M)$. For almost every $x\in M$, one has
$$|\theta(x)-1|\leqslant Cr,$$
and
$$|\nabla\theta(x)|\leqslant C.$$
\end{thetax}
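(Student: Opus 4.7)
For part (1), I would change variables to the tangent space using the extended exponential map:
\[
\theta(x) = \int_{\mathcal{B}_r(0) - \mathcal{W}_r(x)} r^{-n}\phi(|v|/r) J_x(v)\,dv.
\]
Since $\phi$ is normalized so that $\int_{\mathcal{B}_r(0)} r^{-n}\phi(|v|/r)\,dv = 1$, writing
\[
\theta(x) - 1 = \int_{\mathcal{B}_r(0) - \mathcal{W}_r(x)} r^{-n}\phi(|v|/r)(J_x(v) - 1)\,dv - \int_{\mathcal{W}_r(x)} r^{-n}\phi(|v|/r)\,dv,
\]
each term is at most $Cr$ by the Jacobian estimate $|J_x(v) - 1| \leq C|v|$ from Lemma~\ref{Jacobi} and the volume estimate $m(\mathcal{W}_r(x)) \leq Cr^{n+1}$ from Lemma~\ref{W}.

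For part (2), I would use identity~(\ref{diffk}) for $\nabla_x k_r$. The key observation is that $\phi(1) = 0$ makes $k_r$ vanish continuously on the outer part $\{\widehat{d}(x,y) = r\}$ of $\partial\widehat{B}_r(x)$, so this part contributes nothing when differentiating. Pulling the gradient inside and changing variables,
\[
\nabla\theta(x) = \frac{n+2}{\nu_n r^{n+2}} \int_{\mathcal{B}_r(0) - \mathcal{W}_r(x)} v\,J_x(v)\,dv + I_\partial(x),
\]
where $I_\partial(x)$ is the contribution from the $x$-dependence of the excluded set $\mathcal{W}_r(x)$. Antipodal symmetry $\int_{\mathcal{B}_r(0)} v\,dv = 0$ kills the leading part of the interior integral; combined with $|J_x(v) - 1| \leq C|v|$ and $m(\mathcal{W}_r) \leq Cr^{n+1}$, one gets
\[
\Bigl|\int_{\mathcal{B}_r(0) - \mathcal{W}_r(x)} v\,J_x(v)\,dv\Bigr| \leq \int_{\mathcal{B}_r(0)} |v|\cdot C|v|\,dv + \int_{\mathcal{W}_r(x)} |v|\cdot C\,dv \leq Cr^{n+2},
\]
so the interior contribution is $O(1)$ after multiplying by $\frac{n+2}{\nu_n r^{n+2}}$.

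The main obstacle is controlling the boundary contribution $I_\partial(x)$. From the proof of Lemma~\ref{W}, $\mathcal{W}_r(x)$ is the sublevel set of a smooth function of $(x,v)$, namely the collision angle $\alpha(x,v)$, below a threshold of order $K_1 r$; its boundary is thus an $(n-1)$-dimensional hypersurface of measure $\leq Cr^{n-1}$ inside $\mathcal{B}_r(0)$. Implicit differentiation shows this hypersurface moves in $v$-space at speed $O(|\nabla_x\alpha|/|\nabla_v\alpha|) = O(K_1 r)$ per unit displacement of $x$, since $|\nabla_v\alpha| \sim 1/r$ while $|\nabla_x\alpha|$ is controlled by the boundary's second fundamental form. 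With $|k_r| \leq Cr^{-n}$, this gives $|I_\partial(x)| \leq Cr^{-n}\cdot r^{n-1}\cdot r = C$, so $|\nabla\theta| \leq C$ as claimed. The technically delicate point is extracting the geometric structure of $\mathcal{W}_r(x)$ precisely enough to justify the speed bound on its boundary.
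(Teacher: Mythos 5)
Your estimate of $|\theta-1|$ and your treatment of the interior term in $\nabla\theta$ coincide with the paper's own proof: differentiate the kernel via (\ref{diffk}), split the resulting integral into the full ball $\mathcal{B}_r(0)$ minus $\mathcal{W}_r(x)$, kill the leading part of the full-ball integral by antipodal symmetry together with the Jacobian estimate of Lemma \ref{Jacobi}, and bound the $\mathcal{W}_r(x)$ part by Lemma \ref{W}. The paper's proof consists of exactly these steps and nothing more; it does not introduce any term $I_\partial$ coming from the $x$-dependence of the excluded set $\mathcal{W}_r(x)$, i.e.\ the a.e.\ gradient is taken to be given by (\ref{diffk}) alone. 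So the place where you go beyond the paper is precisely the step you yourself flag as the main obstacle, and that step is where your argument has a genuine gap.

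Two concrete problems with your bound for $I_\partial$. First, $\mathcal{W}_r(x)$ is not a sublevel set of a single smooth collision-angle function: by Definition \ref{defW} it is cut out by the conditions ``tangential touch at the first intersection'' and ``a second intersection after one reflection,'' so $\partial\mathcal{W}_r(x)$ has several pieces (including radial ones, where the geodesic of length $|v|$ barely reaches $\partial M$); the inclusion of $\mathcal{W}_r(x)$ in $\{\alpha\leqslant CK_1r\}$ used in Lemma \ref{W} is only an outer bound, not a description of its boundary. Second, and more seriously, the claimed speed $O(|\nabla_x\alpha|/|\nabla_v\alpha|)=O(K_1r)$ rests on $|\nabla_x\alpha|\lesssim K_1$, which fails exactly in the grazing regime that matters here. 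If $x$ is displaced with a component normal to $\partial M$, the first hitting point slides along the boundary at rate comparable to $1/\sin\alpha$, so the unit normal there turns at rate up to $K_1/\sin\alpha$; on the threshold $\alpha\sim K_1 r$ this gives $|\nabla_x\alpha|\sim 1/r$ (unless $\partial M$ is totally geodesic), hence a level-set speed $O(1)$ rather than $O(r)$. Plugged into your own accounting this yields $|I_\partial|\lesssim r^{-n}\cdot r^{n-1}\cdot 1=C/r$, not $C$, and a bound of size $1/r$ is useless downstream (the whole point of the lemma, as the paper remarks, is the extra power of $r$). To close this you would need either a cancellation between directions entering and leaving $\mathcal{W}_r(x)$, or a much finer description of how $\mathcal{W}_r(x)$ varies with $x$ — or simply stop where the paper stops, with the two terms you already control.
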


\begin{proof}
By the definition of the operator $\Lambda_r$ and the kernel $k_r$, we have
\begin{eqnarray*}
\theta(x) &=& \int_{\widehat{B}_r(x)}k_r(x,y)dy = r^{-n} \int_{\widehat{B}_r(x)}\phi(\frac{\widehat{d}(x,y)}{r})dy \\
&=& r^{-n}\int_{\mathcal{B}_r(0)-\mathcal{W}_r(x)\subset T_x M} \phi(r^{-1}|v|)J_x(v) dv,
\end{eqnarray*}
where $\mathcal{W}_r(x)\subset T_x M$ is defined in Definition \ref{defW}. Since $\int_{\mathcal{B}_r(0)}\phi(r^{-1}|v|) dv=r^n$ by our choice of $\phi$, the first estimate follows from Lemma \ref{W} and the Jacobian estimate in Lemma \ref{Jacobi}. As for the second inquality, we differentiate $\theta(x)$ using (\ref{diffk}):
\begin{eqnarray*}
\nabla \theta(x) &=& \frac{n+2}{\nu_n r^{n+2}}\int_{\widehat{B}_r(x)}exp_x^{-1}(y) dy \\
&=& \frac{n+2}{\nu_n r^{n+2}}\int_{\mathcal{B}_r(0)} v J_x(v) dv-\frac{n+2}{\nu_n r^{n+2}}\int_{\mathcal{W}_r(x)} v J_x(v) dv.
\end{eqnarray*}
The second term is bounded by a constant $C$ by virtue of Lemma \ref{W}. For the first term, since $\int_{\mathcal{B}_r(0)}v dv=0$ due to the symmetry, we can replace $J_x(v)$ by $J_x(v)-1$. Then the Jacobian estimate (Lemma $\ref{Jacobi}$) yields the estimate.
$$\frac{n+2}{\nu_n r^{n+2}}|\int_{\mathcal{B}_r(0)} v J_x(v) dv| \leqslant \frac{n+2}{\nu_n r^{n+2}}\int_{\mathcal{B}_r(0)} |v| Cr dv \leqslant C.$$
\end{proof}
Note that the extra power of $r$ generated by the symmetry of balls in tangent spaces is crucial; the proof will not work without it. Now we define $I_r(f)=\theta^{-1}\Lambda_r(f)$ for a function $f\in L^2(M)$ which we will replace at the end by the piecewise constant function $P^{\ast}u$ for a discrete function $u\in L^2(X)$. The following two lemmas estimate the norm and energy of $I_r(f)$ in terms of $\widehat{E}_r(f)$ defined in (\ref{Erformanifolds}).

\begin{1}\label{1}
For $f\in L^2 (M)$, one has
$$||I_r f-f||_{L^2(M)}^2\leqslant \frac{C}{\nu_n r^n}\widehat{E}_r(f).$$
\end{1}

\begin{proof}
The proof is straightforward. One can refer to Lemma $5.4$ in \cite{BIK} or Lemma \ref{new2}(1).
\end{proof}

\begin{2}\label{2}
For $f\in L^2 (M)$, one has
$$||\nabla(I_r f)||_{L^2(M)}^2\leqslant (1+Cr)\frac{n+2}{\nu_n r^{n+2}}\widehat{E}_r(f).$$
\end{2}

\begin{proof}
For any fixed $x_0\notin \partial M$, by the definition of $\theta$ we have
\begin{equation}\label{A6}
\theta^{-1}\Lambda_r f(x)=f(x_0)+\theta^{-1}\int_{M} (f(y)-f(x_0))k_r(x,y)dy.
\end{equation}
Differentiating (\ref{A6}) and evaluating it at the point $x_0$ yields
\begin{eqnarray*}
\nabla(\theta^{-1}\Lambda_r f)(x_0) &=& \theta^{-1}(x_0)\int_{M} (f(y)-f(x_0))\frac{\partial}{\partial x}k_r(x_0,y)dy \\
&+& \nabla(\theta^{-1})(x_0)\int_{M} (f(y)-f(x_0))k_r(x_0,y)dy \\
&=& \theta^{-1}(x_0)A_1(x_0)+A_2(x_0),
\end{eqnarray*}
where
$$A_1(x)=\int_{\widehat{B}_r(x)} (f(y)-f(x))\frac{\partial}{\partial x}k_r(x,y)dy,$$
and
$$A_2(x)=\nabla(\theta^{-1})\int_{\widehat{B}_r(x)} (f(y)-f(x))k_r(x,y)dy.$$
Since $|\theta-1|\leqslant Cr$, we have
\begin{equation}\label{A1A2}
||\nabla(\theta^{-1}\Lambda_r f)||_{L^2}\leqslant (1+Cr)||A_1||_{L^2}+||A_2||_{L^2}.
\end{equation}
First, we estimate $||A_1||_{L^2}$. For $x\notin \partial M$, set $\omega=\frac{A_1(x)}{|A_1(x)|}$ and we have
\begin{eqnarray*}
|A_1(x)|&=& \langle A_1(x),w\rangle \\
&=&\frac{n+2}{\nu_n r^{n+2}}\int_{\widehat{B}_r(x)}(f(y)-f(x))\langle exp_x^{-1}(y),w\rangle dy \\
&=& \frac{n+2}{\nu_n r^{n+2}}\int_{\mathcal{B}_r(0)-\mathcal{W}_r(x)\subset T_x M}(f(exp_x(v))-f(x)) \langle v,w\rangle  J_x(v) dv.
\end{eqnarray*}
By the Cauchy-Schwarz inequality,
\begin{eqnarray*}
|A_1(x)|^2 &\leqslant& (\frac{n+2}{\nu_n r^{n+2}})^2(\int_{\mathcal{B}_r(0)-\mathcal{W}_r(x)}|f(exp_x(v))-f(x)|^2 |J_x(v)|^2 dv)(\int_{\mathcal{B}_r(0)} \langle v,w\rangle^2 dv) \\
&=& \frac{n+2}{\nu_n r^{n+2}}\int_{\mathcal{B}_r(0)-\mathcal{W}_r(x)}|f(exp_x(v))-f(x)|^2 |J_x(v)|^2 dv.
\end{eqnarray*}
The last equality is due to the following argument. Expand $w=w_1$ to an orthonormal basis $w_j$ of $T_x M$ for $j=1,\cdots,n$. Since $|v|^2=\sum_j \langle v,w_j\rangle^2$, we have $\int_{|v|<r}|v|^2 dv=\sum_j \int_{|v|<r}\langle v,w_j\rangle^2 dv$. Due to the symmetry of $\mathcal{B}_r(0)\subset T_x M$, we have for any $j$,
\begin{equation}\label{sublemma}
\int_{|v|<r}\langle v,w_j\rangle^2 dv=\frac{1}{n}\int_{|v|<r}|v|^2 dv=\frac{\nu_n}{n+2}r^{n+2}.
\end{equation}
Hence by the Jacobian estimate (Lemma \ref{Jacobi}),
\begin{eqnarray*}
||A_1(x)||_{L^2}^2 &\leqslant & \frac{n+2}{\nu_n r^{n+2}}\int_M \int_{\widehat{B}_r(x)}|f(y)-f(x)|^2 |J_x(exp_x^{-1}(y))|^2 dy dx\\
&\leqslant& (1+Cr) \frac{n+2}{\nu_n r^{n+2}}\widehat{E}_r(f).
\end{eqnarray*}
Next we estimate $||A_2||_{L^2}$. Due to $|\theta|\leqslant C$, $|\nabla\theta|\leqslant C$ (Lemma \ref{thetax}) and $k_r\leqslant\frac{C}{\nu_n r^n}$, we get
\begin{eqnarray*}
|A_2(x)|^2 &\leqslant& |\nabla(\theta^{-1})|^2 (\int_{\widehat{B}_r(x)}k_rdy)(\int_{\widehat{B}_r(x)}|f(y)-f(x)|^2k_rdy)\\
&\leqslant & |\nabla(\theta^{-1})|^2 \theta(x) \frac{C}{\nu_n r^n} \int_{\widehat{B}_r(x)}|f(y)-f(x)|^2 dy \\
&\leqslant & \frac{C}{\nu_n r^n} \int_{\widehat{B}_r(x)}|f(y)-f(x)|^2 dy.
\end{eqnarray*}
Therefore,
$$||A_2(x)||_{L^2}^2 \leqslant \frac{C}{\nu_n r^n} \int_M\int_{\widehat{B}_r(x)}|f(y)-f(x)|^2 dy dx = \frac{C}{\nu_n r^n} \widehat{E}_r(f). $$
Finally combining the estimates on the norm of $A_1$ and $A_2$, by (\ref{A1A2}) we obtain
\begin{eqnarray*}
||\nabla(\theta^{-1}\Lambda_r f)||_{L^2} &\leqslant& (1+Cr)||A_1||_{L^2}+||A_2||_{L^2} \\
&\leqslant& ((1+Cr)^{\frac{3}{2}}+Cr)\sqrt{\frac{n+2}{\nu_n r^{n+2}}\widehat{E}_r(f)} \\
&\leqslant& (1+Cr)\sqrt{\frac{n+2}{\nu_n r^{n+2}}\widehat{E}_r(f)}.
\end{eqnarray*}
\end{proof}
Now we replace the $L^2(M)$ function $f$ in the previous two lemmas by the piecewise constant function $P^{\ast}u$ for a discrete function $u\in L^2(X)$. The only thing left is to estimate $\widehat{E}_{r}(P^{\ast}u)$ in terms of the discrete energy $||\delta u||^2$. By the definition of discrete energy $||\delta u||^2$, we have
\begin{eqnarray*}
||\delta u||^2&=& \frac{n+2}{\nu_n \rho^{n+2}}\sum_i ( \sum_{j: d(x_i,x_j)<\rho}+\sum_{j: \widetilde{d}(x_i,x_j)<\rho}) \mu_i\mu_j |u(x_j)-u(x_i)|^2 \\
              &=& \frac{n+2}{\nu_n \rho^{n+2}} \int_{M} \int_{\bigcup_{\widehat{d}(x_i,x_j)<\rho}V_j, x\in V_i}|P^{\ast}u(y)-P^{\ast}u(x)|^2 dy dx \\
              &\geqslant& \frac{n+2}{\nu_n \rho^{n+2}} \widehat{E}_{\rho-4\varepsilon}(P^{\ast}u),
\end{eqnarray*}
where the last inequality is due to the triangle inequality (\ref{triangle0}). This part is simpler than the part we did in (\ref{Edelta3}), since the discrete energy already contains more information than $\widehat{E}_r$. Therefore we get
\begin{equation}\label{Edelta2}
\widehat{E}_{\rho-4\varepsilon}(P^{\ast}u)\leqslant \frac{\nu_n \rho^{n+2}}{n+2}||\delta u||^2.
\end{equation}

Now set $r=\rho-4\varepsilon$, and define $Iu=I_{\rho-4\varepsilon}(P^{\ast}u)=\theta^{-1}\Lambda_{\rho-4\varepsilon}(P^{\ast}u)$, for $u\in L^2(X)$. Insert (\ref{Edelta2}) into the estimates in Lemma \ref{1} and \ref{2}, and we can finally bound the norm and energy of $Iu$ by the discrete norm and energy of $u$.
\begin{eqnarray}\label{eigenfunction0}
||I u-P^{\ast}u||_{L^2(M)}^2 & \leqslant & \frac{C}{\nu_n (\rho-4\varepsilon)^n}\widehat{E}_{\rho-4\varepsilon}(P^{\ast}u) \nonumber \\
& \leqslant & C (\frac{\rho}{\rho-4\varepsilon})^n \rho^2||\delta u||^2 \nonumber \\
 &\leqslant & C\rho^2 ||\delta u||^2,
\end{eqnarray}
and
\begin{eqnarray}
||\nabla(I u)||_{L^2(M)}^2&\leqslant& (1+C\rho)\frac{n+2}{\nu_n (\rho-4\varepsilon)^{n+2}}\widehat{E}_{\rho-4\varepsilon}(P^{\ast}u) \nonumber \\
              &\leqslant& (1+C\rho)(\frac{\rho}{\rho-4\varepsilon})^{n+2}||\delta u||^2 \nonumber \\
              &\leqslant& (1+C\rho+C\frac{\varepsilon}{\rho})||\delta u||^2.
\end{eqnarray}

Hence we obtain the lower bound for $\lambda_k(\Gamma)$ in the same way as Lemma \ref{upperbound}.

\begin{lowerbound}\label{lowerbound}
For sufficiently small $\rho$, we have
$$\lambda_k(\Gamma)\geqslant (1-C\rho-C\frac{\varepsilon}{\rho}-C\rho\sqrt{\lambda_k})\lambda_k(M),$$
where the constants explicitly depend on $n,K_1,K_2$.
\end{lowerbound}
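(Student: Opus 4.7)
The plan is to prove the lower bound by mirroring the argument of Lemma \ref{upperbound} with the roles of the graph and the manifold interchanged: I take the first $k+1$ eigenfunctions of $-\Delta_{\Gamma}$, smooth them into $H^1(M)$ via the operator $Iu=\theta^{-1}\Lambda_{\rho-4\varepsilon}(P^{\ast}u)$ already constructed, and then compare Rayleigh quotients using the continuous min-max formula for $\lambda_k(M)$. Concretely, let $u_0,\ldots,u_k$ be $L^2(X)$-orthonormal eigenfunctions of $-\Delta_{\Gamma}$ with eigenvalues $\lambda_0(\Gamma),\ldots,\lambda_k(\Gamma)$, and consider the test subspace $\mathrm{span}\{Iu_0,\ldots,Iu_k\}\subset H^1(M)$.

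The first step is to verify injectivity of $I$ on this subspace, so that the image is genuinely $(k+1)$-dimensional. For any $u$ in the span, $\|\delta u\|^2\leqslant\lambda_k(\Gamma)\|u\|_{L^2(X)}^2$, and combining the triangle inequality with the $L^2$-estimate (3.10) and the identity $\|P^{\ast}u\|_{L^2(M)}=\|u\|_{L^2(X)}$ gives
$$\|Iu\|_{L^2(M)}\geqslant \|P^{\ast}u\|_{L^2(M)}-\|Iu-P^{\ast}u\|_{L^2(M)}\geqslant \bigl(1-C\rho\sqrt{\lambda_k(\Gamma)}\bigr)\|u\|_{L^2(X)}.$$
For $\rho$ small enough that $C\rho\sqrt{\lambda_k(\Gamma)}<1/2$, this two-sided comparison makes $I$ injective on the span.

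Next, the energy estimate (3.11) yields
$$\|\nabla(Iu)\|_{L^2(M)}^2\leqslant \bigl(1+C\rho+C\tfrac{\varepsilon}{\rho}\bigr)\|\delta u\|^2\leqslant \bigl(1+C\rho+C\tfrac{\varepsilon}{\rho}\bigr)\lambda_k(\Gamma)\|u\|_{L^2(X)}^2,$$
so the Rayleigh quotient of every $Iu$ in the test subspace is bounded by
$$\frac{\|\nabla(Iu)\|_{L^2(M)}^2}{\|Iu\|_{L^2(M)}^2}\leqslant \frac{(1+C\rho+C\varepsilon/\rho)\lambda_k(\Gamma)}{\bigl(1-C\rho\sqrt{\lambda_k(\Gamma)}\bigr)^2}.$$
Applying the min-max formula for $\lambda_k(M)$ on $H^1(M)$ to this $(k+1)$-dimensional test subspace bounds $\lambda_k(M)$ by the same right-hand side, and a direct rearrangement (linearising the denominator for small $\rho$) delivers
$$\lambda_k(\Gamma)\geqslant \bigl(1-C\rho-C\tfrac{\varepsilon}{\rho}-C\rho\sqrt{\lambda_k(\Gamma)}\bigr)\lambda_k(M).$$

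The only remaining, and main, obstacle is that the error on the right still contains $\sqrt{\lambda_k(\Gamma)}$ rather than $\sqrt{\lambda_k(M)}$. I would resolve this by a short bootstrap using the upper bound of Lemma \ref{upperbound}: for $\rho$ small enough (which is already imposed), that lemma gives $\lambda_k(\Gamma)\leqslant C\lambda_k(M)+C$, so $\sqrt{\lambda_k(\Gamma)}\leqslant C\sqrt{\lambda_k(M)}+C$, and the extraneous additive constant is absorbed into the $C\rho$ term after enlarging $C$. The substantive work is thus contained in the two displayed estimates above; the rest is careful bookkeeping of the smallness threshold on $\rho$ so that the linearisations $(1-C\rho\sqrt{\lambda_k(\Gamma)})^{-2}\approx 1+2C\rho\sqrt{\lambda_k(\Gamma)}$ and the bootstrap are simultaneously valid.
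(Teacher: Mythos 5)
Your argument is correct and is essentially the paper's intended proof: the paper simply asserts the lower bound follows "in the same way as Lemma \ref{upperbound}", i.e.\ by smoothing the first $k+1$ graph eigenvectors with $I$, using the estimates $\|Iu-P^{\ast}u\|_{L^2}^2\leqslant C\rho^2\|\delta u\|^2$ and $\|\nabla(Iu)\|_{L^2}^2\leqslant(1+C\rho+C\varepsilon/\rho)\|\delta u\|^2$, and invoking the min-max formula for $\lambda_k(M)$ on the $(k+1)$-dimensional image. Your final bootstrap via Lemma \ref{upperbound} to replace $\sqrt{\lambda_k(\Gamma)}$ by $\sqrt{\lambda_k(M)}$ is exactly the right bookkeeping step.
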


\begin{proof}[Proof of Theorem \ref{main}]
The convergence of eigenvalues is given by Lemma \ref{upperbound} and \ref{lowerbound}. The convergence of eigenfunctions with an explicit rate is a direct consequence of (\ref{eigenfunction0}) and the convergence of eigenvalues. The proof is straightforward and we only provide a sketch here. One may refer to the proof of Theorem $4$ in \cite{BIK}.

Suppose $u_0,u_1,\cdots,u_k$ are the first $k+1$ orthonormal eigenvectors of the graph Laplacian $\Delta_{\Gamma}$, and consider the linear subspace spanned by $\{Iu_0,\cdots,Iu_k\}$. For sufficiently small $\varepsilon,\rho$, the operator $I$ is injective and hence the linear subspace is also $(k+1)$-dimensional. Furthermore by (\ref{eigenfunction0}), the operator $I$ on the subspace almost preserves $L^2$ norms up to an error of $C\rho\sqrt{\lambda_k(\Gamma)}$ and hence almost preserves inner products. This implies that $I{u_k}$ is almost orthogonal to $Iu_0,\cdots,Iu_{k-1}$. Since $u_0$ is a constant discrete function, $Iu_0$ is a constant function by definition and is hence the first eigenfunction. For $Iu_1$, we know that $||\nabla(Iu_1)||_{L^2}^2/||Iu_1||_{L^2}^2$ is bounded from above almost by $\lambda_1(\Gamma)$ for sufficiently small $\varepsilon,\rho$ and hence bounded almost by $\lambda_1(M)$ due to the convergence of eigenvalues. Since the projection of $Iu_1$ onto $Iu_0$ i.e. the first eigenfunction is small, it follows that the projection of $Iu_1$ onto any eigenspace with respect to an eigenvalue larger than $\lambda_1(M)$ is also small. This implies that $Iu_1$ is close to an eigenfunction with respect to the eigenvalue $\lambda_1(M)$. Repeating this process and one can prove that for any $k$, $Iu_k$ is close in $L^2$ to an eigenfunction with respect to the eigenvalue $\lambda_k(M)$. The convergence of eigenfunctions follows from the fact that $P^{\ast}u_k$ and $Iu_k$ is close in $L^2$ by (\ref{eigenfunction0}).

\end{proof}

\section{Metric-measure spaces glued out of manifolds}

In this section, we suppose $M$ is a metric-measure space which is isometrically glued out of compact Riemannian manifolds of the same dimension. More precisely, consider a metric-measure space $M$ satisfying the following conditions:\\
$(1)$ $M=\cup_{l=1}^{m} M_l$, where each $M_l$ is an $n$-dimensional compact Riemannian manifold without boundary or with smooth boundary, and $M_l \in \mathcal{M}_n(K_1,K_2,D,i_0)$ as in Definition \ref{bounded};\\
$(2)$ For any $j<l$, every connected component of each nonempty gluing locus $M_{j}\cap M_l$ is a $C^2$ submanifold of both $M_j$ and $M_l$ of codimension at least $1$ with piecewise $C^2$ boundary. Denote the whole gluing locus by $S=\cup_{j<l}(M_{j}\cap M_l)$ and $\partial M=\cup_l \partial M_l-S$. And the Riemannian metrics of $M_j$ and $M_l$ agree at tangent spaces $T_x (M_j\cap M_l)$ for any $x\in M_j\cap M_l\neq\emptyset$. \\
\indent We start by discussing the space of functions on $M$. If $u$ is a function on $M$, denote by $u_l$ the restriction of $u$ onto $M_l$.

\begin{function}
$C^{\infty}(M)=\{u:  u_l\in C^{\infty}(M_l-S)\cap C^0(M),\textrm{ for any }l\,\}$;\\
$\widehat{W}^{k,p}(M)=\{u:  u_l \in W^{k,p}(M_l)\}$;\\
$H^1(M)=W^{1,2}(M)=\{\overline{C^{\infty}(M)}\subset \widehat{W}^{1,2}(M)\}$.\\
The derivatives of a function at a point outside the gluing loci are taken with respect to the Riemannian structure where the point lies. The $L^p$ and $W^{k,p}$ norms are the sum of norms with respect to the canonical Riemannian volume form $dv_l$ of each $M_l$.
\end{function}

\begin{sobolev}\label{sobolev}
$(1)$ $L^p(M)$, $\widehat{W}^{k,p}(M)$ and $H^1(M)$ are Banach spaces.\\
$(2)$ The Sobolev embedding theorem holds in $\widehat{W}^{k,p}(M)$, so does it in $H^1(M)$.\\
$(3)$ If each nonempty gluing locus $M_j\cap M_l(j\neq l)$ has codimension at least $2$, then $H^1(M)=\widehat{W}^{1,2}(M)$.
\end{sobolev}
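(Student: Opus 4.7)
For part (1), the plan is to observe that $L^p(M)$ and $\widehat{W}^{k,p}(M)$ are, by construction, isomorphic to direct sums $\bigoplus_l L^p(M_l)$ and $\bigoplus_l W^{k,p}(M_l)$, each equipped with a norm equivalent to the product norm. Each summand is a Banach space (compact manifold with smooth boundary), so the product is Banach. The space $H^1(M)$ is defined as the closure of $C^\infty(M)$ inside $\widehat{W}^{1,2}(M)$, hence is a closed subspace of a Banach space and is itself Banach. Part (2) follows from the same direct-sum structure: the Sobolev embedding theorem for each compact $M_l$ (with smooth boundary) sums to give the embedding for $\widehat{W}^{k,p}(M)$, and $H^1(M)\subset \widehat{W}^{1,2}(M)$ inherits it.

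For part (3), the nontrivial claim is that $C^\infty(M)$ is dense in $\widehat{W}^{1,2}(M)$ when every gluing locus has codimension at least $2$. Given $u\in\widehat{W}^{1,2}(M)$, first reduce to the case $u\in L^\infty(M)$ via the standard truncation $u^N=\max(-N,\min(u,N))$; by dominated convergence together with $|\nabla u^N|\leqslant |\nabla u|$, one has $u^N\to u$ in $\widehat{W}^{1,2}(M)$. Next, introduce a logarithmic cutoff adapted to $S$: let $d(x)=d(x,S)$ and set
\begin{equation*}
\chi_r(x)=\begin{cases}0 & d(x)\leqslant r^2,\\ \dfrac{\log(d(x)/r^2)}{\log(1/r)} & r^2\leqslant d(x)\leqslant r,\\ 1 & d(x)\geqslant r.\end{cases}
\end{equation*}
Then $|\nabla \chi_r|\leqslant (d(x)\log(1/r))^{-1}$ on the transition region. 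Using the coarea formula together with the tubular-volume bound $\mathrm{vol}(\{d<t\})\leqslant C t^{\,\mathrm{codim}\,S}$ valid for codimension at least $2$, one obtains
\begin{equation*}
\int_M |\nabla \chi_r|^2\,dv \leqslant \frac{C}{\log^2(1/r)}\int_{r^2}^{r}\frac{1}{t^2}\cdot t\,dt\leqslant \frac{C}{\log(1/r)}\longrightarrow 0 \text{ as }r\to 0,
\end{equation*}
and $\mathrm{vol}(\{d<r\})\to 0$ as well.

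The product $\chi_r u^N$ then approximates $u^N$ in $\widehat{W}^{1,2}(M)$: the $L^2$ error is bounded by $\|u^N\|_{L^\infty}\sqrt{\mathrm{vol}(\{d<r\})}$, while the gradient error splits as $(1-\chi_r)\nabla u^N + u^N\nabla\chi_r$, whose first term tends to zero by dominated convergence and whose second term is bounded by $\|u^N\|_{L^\infty}\|\nabla\chi_r\|_{L^2}\to 0$. Since $\chi_r u^N$ vanishes identically in an $r^2$-neighborhood of $S$, it is supported away from $S$ and hence its piecewise mollification within each $M_l$ (at a scale smaller than $r^2$) yields a function in $C^\infty(M)$: it is smooth in each $M_l-S$ and is globally continuous on $M$ because it simply vanishes near $S$. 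A standard diagonal argument in $N,r,$ and the mollification parameter then produces an approximation of $u$ by elements of $C^\infty(M)$, establishing $\widehat{W}^{1,2}(M)\subset H^1(M)$; the reverse inclusion is immediate from the definition.

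The principal obstacle is the construction of the cutoff: a linear cutoff supported in an $r$-tube satisfies $\|\nabla\chi_r\|_{L^2}^2\lesssim r^{-2}\cdot r^{\,\mathrm{codim}\,S}$, which fails to vanish exactly at codimension $2$. The logarithmic cutoff above is precisely what rescues the argument in the critical codimension, and explains why the hypothesis $\mathrm{codim}\,S\geqslant 2$ is the sharp requirement for identifying $H^1(M)$ with $\widehat{W}^{1,2}(M)$.
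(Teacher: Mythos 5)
Your proposal is correct and takes essentially the same route as the paper: the paper's proof of part (3) is precisely the logarithmic cutoff $f_R(r)=\ln R/\ln r$ near the gluing locus in the critical codimension-$2$ case (with simple linear cutoffs sufficing in codimension $\geqslant 3$), which is your $\chi_r$, while parts (1) and (2) are dismissed as standard just as in your direct-sum argument. You have merely written out explicitly the truncation, cutoff-energy, and piecewise-mollification steps that the paper leaves as ``straightforward calculations,'' and your closing remark on why a linear cutoff fails exactly at codimension $2$ matches the paper's implicit reason for treating that case separately.
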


\begin{proof}
The proof of (1),(2) is rather standard. Here we show how to prove (3). For the case of codimension at least $3$, one can simply use the linear interpolation functions to approximate an arbitrary $\widehat{W}^{1,2}(M)$ function. As for codimension $2$, consider a function which is $f_R(r)=\frac{\ln{R}}{\ln{r}}$ on a $2$-dimensional ball of radius $R$, and equals to $1$ outside the ball within a larger bounded domain of $\mathbb{R}^2$. The function is smooth everywhere except at the origin. However, if the origin is on the gluing locus, the function is considered to be smooth by our definition. By straightforward calculations, this family of functions approximate a point jump in $W^{1,2}(\mathbb{R}^2)$ norm as $R\to 0$, which can be modified to approximate a codimension $2$ jump at the gluing locus.
\end{proof}

Define $\Delta_{M}$ pointwise on $M-S$, which is the Laplace-Beltrami operator on the manifold part where the point lies. We define the eigenvalue problem of $\Delta_{M}$ as follows.

\begin{eigenvalue}\label{eigenvalue}
Consider the following equation in weak sense,
$$-\Delta_{M} u=\lambda u,$$ more precisely,
$$\sum_{l=1}^{m}\int_{M_l}\nabla u \cdot \nabla \psi\; dv_l=\lambda \sum_{l=1}^{m}\int_{M_l} u\psi \; dv_l, \;\;\forall \psi\in C^{\infty}(M).$$
If there exists a nontrivial solution $u\in H^1(M)$ for some $\lambda\in\mathbb{R}$, then $\lambda$ is an eigenvalue of $-\Delta_M$ and $u$ is an eigenfunction with respect to the eigenvalue $\lambda$.
\end{eigenvalue}

The same as for manifolds, the spectrum of $-\Delta_M$ is discrete and non-negative. Definition \ref{eigenvalue} is equivalent to the following min-max formula:
$$\lambda_k(M)= \inf_{Q^{k+1}} \sup_{u\in Q^{k+1}-\{0\}} \frac{||\nabla u||_{L^2(M)}^2}{||u||_{L^2(M)}^2},$$
where $Q^{k+1}$ ranges over all $(k+1)$-dimensional subspaces of $H^1(M)$. And the minimizers are the solutions of the Laplacian eigenvalue problem. Actually the min-max formula can be applied to prove the existence of the eigenfunctions with the help of the Sobolev embedding theorem. The boundary condition is Neumann, which coincides with the case for manifolds with boundary. We focus on the case where each nonempty gluing locus has codimension $1$, because for codimension at least $2$, the spectrum of $M$ is essentially the spectra of individual manifold parts due to Lemma \ref{sobolev}(3).

\begin{eigenfunction}\label{eigenfunction}
Assume each nonempty gluing locus has codimension $1$. Then the eigenfunctions of $-\Delta_M$ lie in $C^{\infty}(M)$ and they (after orthonormalization) form an orthonormal basis of $L^2(M)$. The eigenfunctions are subject to the Neumann boundary condition and a Kirchhoff-type condition at the gluing locus:
$$\frac{\partial u}{\partial \vec{n}}|_{\partial M}=0,$$ and
$$\sum\frac{\partial u}{\partial \vec{n}}|_{S}=0,$$
where the sum is over all possible inward normal directions at the gluing loci $S$.
\end{eigenfunction}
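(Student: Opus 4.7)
The plan is to first produce the orthonormal eigenbasis by a Hilbert-space spectral argument, then localize test functions to derive the pointwise equation and the Neumann condition on $\partial M$, and finally integrate by parts across the gluing locus to get the Kirchhoff condition. For the basis property, $H^1(M)$ is a separable Hilbert space by Lemma \ref{sobolev}(1), and the embedding $H^1(M)\hookrightarrow L^2(M)$ is compact since $M=\cup_l M_l$ is a finite union and Rellich--Kondrachov applies on each compact smooth $M_l$ by Lemma \ref{sobolev}(2). The shifted form $\tilde a(u,v)=\int_M \nabla u\cdot \nabla v\, dv + \int_M uv\, dv$ is continuous and coercive on $H^1(M)$, so its resolvent acts as a compact self-adjoint operator on $L^2(M)$; the spectral theorem then produces a countable orthonormal eigenbasis of $L^2(M)$ with eigenvalues $0=\lambda_0\leqslant \lambda_1 \leqslant \cdots \to \infty$.

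Next, for an eigenfunction $u\in H^1(M)$, I would deduce the pointwise PDE and the Neumann condition via restricted test functions. For $\psi\in C_c^\infty(M_l^\circ - S)$ extended by zero to other manifold parts, $\psi\in C^\infty(M)$, and the weak equation reduces to $\int_{M_l}\nabla u_l\cdot\nabla\psi\, dv_l = \lambda \int_{M_l} u_l\psi\, dv_l$; interior elliptic regularity yields $u_l\in C^\infty(M_l^\circ)$ with $-\Delta u_l=\lambda u_l$ pointwise. For a point $x\in \partial M\cap\partial M_l$ with a neighborhood $U$ disjoint from $S$, allowing $\psi\in C^\infty(M)$ supported in $U$ (extended by zero elsewhere) and integrating by parts produces $\int_{\partial M\cap\partial M_l}\psi\,(\partial u_l/\partial \vec{n})\, dS=0$ for all such $\psi$; arbitrariness of the trace of $\psi$ on $\partial M$ gives the weak Neumann condition, and classical Neumann regularity for smooth boundaries upgrades $u_l$ to $C^\infty$ up to $\partial M\cap\partial M_l$.

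To obtain the Kirchhoff condition, I would pick a point $p$ in the interior of a connected component of $S$ (away from $\partial M$ and the boundary of $S$) and a test function $\psi\in C^\infty(M)$ supported in a small neighborhood $U$ of $p$ disjoint from $\partial M$. Integrating by parts separately on each $M_l$ meeting $U$ and using $-\Delta u_l=\lambda u_l$, the interior volume terms cancel and one is left with
\begin{equation*}
\sum_l \int_{S\cap U}\psi\,\frac{\partial u_l}{\partial \vec{n}_l}\, dS = 0.
\end{equation*}
Since any smooth function on $S\cap U$ extends to an element of $C^\infty(M)$ by means of the codimension-one $C^2$ structure of $S$, the trace of $\psi$ on $S$ is arbitrary, yielding the Kirchhoff condition $\sum_l \partial u_l/\partial \vec{n}_l = 0$ on $S$.

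The hard part will be justifying this integration by parts at $S$, i.e., producing well-defined pointwise normal traces of $u_l$ and pointwise continuity of $u$ across $S$. I would handle this via a transmission-regularity argument in local coordinates flattening $S$: the problem becomes a collection of half-space Laplace equations sharing a common Dirichlet trace $g=u|_S\in H^{1/2}$, coupled only by the Kirchhoff linear constraint on the Neumann traces. The sum $\sum_l \Lambda_l$ of the Dirichlet-to-Neumann maps of the various $M_l$ is a first-order elliptic pseudodifferential operator with strictly positive principal symbol, so pseudodifferential elliptic regularity promotes $g$ to $C^\infty$ in the interior of each connected component of $S$; classical boundary elliptic regularity for the Dirichlet problem on each $M_l$ then lifts $u_l$ to $C^\infty$ up to $S$ on its side, giving $u\in C^0(M)$ and $u_l\in C^\infty(M_l - S)$ as required.
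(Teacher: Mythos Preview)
Your proposal is correct and follows the same overall architecture as the paper: obtain the eigenbasis by the compact-resolvent argument, localize test functions in each $M_l$ to get interior regularity and the pointwise equation, integrate by parts near $\partial M$ for the Neumann condition, and integrate by parts across $S$ for the Kirchhoff condition. The paper's proof is quite terse and does exactly this, simply citing ``the standard regularity procedure'' and ``integrating by parts.''

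The one place you diverge is the justification of $u_l\in C^\infty$ up to $S$. The paper argues more directly: it takes test functions that are the usual bump functions inside $M_1$ and extends them across $S$ into the adjacent pieces by a cutoff so as to remain in $C^\infty(M)$; the resulting weak identity is then fed into classical boundary regularity on $M_1$, and continuity of $u$ across $S$ is read off from the fact that a codimension-$1$ jump is incompatible with $H^1(M)$. Your route---reducing to a transmission problem, showing the sum of Dirichlet-to-Neumann maps is a first-order elliptic pseudodifferential operator on $S$, and bootstrapping the trace $u|_S$ to $C^\infty$---is heavier machinery but has the virtue of making the coupling between the pieces completely explicit and of producing the normal traces you need before you integrate by parts. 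The paper's shortcut is lighter but leaves more to the reader (in particular, the cut-off test functions do not literally decouple the pieces, so one is implicitly relying on transmission regularity anyway). Either argument is acceptable here; yours is the more careful of the two.
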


\begin{proof}
The smoothness of the eigenfunctions is due to the standard regularity procedure. Without loss of generality, assume each gluing locus is connected. We apply the test function being the usual choice inside one manifold part, say $M_1$, and cut off in other manifold parts to satisfy the continuity condition. Recall that we denote by $u_l$ the restriction of $u$ onto $M_l$. The regularity procedure implies $u_1\in C^{\infty}(\overline{M_1-S})$. The same procedure yields $u_l\in C^{\infty}(\overline{M_l-S})$ for all $l$. And $u_l$ must agree at $S$, since a function with a codimension $1$ jump fails to be a $H^1(M)$ function. The boundary condition and the condition at the gluing locus directly follow from integrating by parts.
\end{proof}

Let me give a few examples to show what the eigenfunctions look like on the metric-measure spaces in question.

\begin{circle11}
Consider two identical circles of perimeter $1$ glued at one point. One can think of this space as the interval $[0,2]$, with $0,1$ and $2$ glued. The lower eigenvalues and respective eigenfunctions of the Laplacian eigenvalue problem are as follows.\\
$\lambda_0=0$, $f_0(x)=const$;\\
$\lambda_1=\pi^2$, $f_1(x)=\sin{\pi x}$;\\
$\lambda_2=4\pi^2$, $f_{2,1}(x)=\cos{2\pi x}, f_{2,2}(x)=\sin{2\pi x}$,
$f_{2,3}(x) = \left\{ \begin{array}{ll}
\sin{2\pi x} & x\in [0,1]\\
0 & x\in (1,2]\\
\end{array}. \right.$
Note that $f_{2,3}$ is not smooth on $[0,2]$ with respect to the usual topology, but it is smooth in this metric-measure space by our definition.
\end{circle11}

\begin{circle21}
Consider two circles of perimeter $2,1$ glued at one point. Then the lower eigenvalues and eigenfunctions are:\\
$\lambda_0=0$, $f_0(x)=const$;\\
$\lambda_1=\frac{4}{9}\pi^2$, $f_1(x)=\cos{\frac{2}{3}\pi x}-\sqrt{3}\sin{\frac{2}{3}\pi x}$;\\
$\lambda_2=\pi^2$, $f_{2}(x)=\left\{ \begin{array}{ll}
\sin{\pi x} & x\in [0,2]\\
0 & x\in (2,3]\\
\end{array} \right.$;\\
$\lambda_3=\frac{16}{9}\pi^2$, $\lambda_4=4\pi^2$, with multiplicity $1$ and $3$.
\end{circle21}


\begin{torus11}
Consider two flat tori glued at one point. Since codimension $2$ jumps are permitted, all the eigenfunctions on this space are the combinations of the eigenfunctions on each torus. Hence we see double multiplicities in this case.\\
$\lambda_0=0$, with multiplicity $2$;\\
$\lambda_1=4\pi^2$, with multiplicity $8$;\\
$\lambda_2=8\pi^2$, with multiplicity $8$.
\end{torus11}

\begin{circlesegment}
Consider a circle of perimeter $1$ glued with a segment of length $1$. It is essentially the space of $[0,2]$, with $1$ and $2$ glued.\\
$\lambda_0=0$, $f_0(x)=const$;\\
$\sqrt{\lambda_1}=2\arccos(\frac{\sqrt{3}}{3})$, $f_1(x)=\left\{ \begin{array}{ll}
\cos{\sqrt{\lambda_1} x} & x\in [0,1]\\
-\frac{\sqrt{3}}{3}\cos{\sqrt{\lambda_1}(x-1.5)} & x\in (1,2]\\
\end{array}; \right.$ \\
$\sqrt{\lambda_2}=2\pi-2\arccos(\frac{\sqrt{3}}{3})$, $f_2(x)=\left\{ \begin{array}{ll}
\cos{\sqrt{\lambda_2} x} & x\in [0,1]\\
\frac{\sqrt{3}}{3}\cos{\sqrt{\lambda_2}(x-1.5)} & x\in (1,2]\\
\end{array}; \right.$ \\
$\lambda_3=4\pi^2$, $f_{3,1}(x)=\cos{2\pi x}$, $f_{3,2}(x)=\left\{ \begin{array}{ll}
0 & x\in [0,1]\\
\sin{2\pi x} & x\in (1,2]\\
\end{array}. \right.$ \\
Note that $f_1$ and $f_2$ are not smooth on $[0,2]$ with respect to the usual topology. The derivative splits at point $1$, with half propagating past $1$ and the other half propagating past $2$ in the other direction.
\end{circlesegment}

\section{Proof of Theorem \ref{main2} and \ref{main3}}

In this section, we prove Theorem \ref{main2} and \ref{main3}. Suppose $M$ is a metric-measure space which is glued out of $m$ number of $n$-dimensional compact Riemannian manifolds $M_l\in \mathcal{M}_n(K_1,K_2,D,i_0)$, as defined at the beginning of Section $4$. The gluing loci is denoted by $S$. Suppose the Assumption is satisfied, and the mirror images (Definition \ref{mirror}) are defined near the gluing loci. From now on, we denote by $o(1)$ the rate of $||\nabla \Phi||_r$ converging to $1$ as $r\to 0$ guaranteed by the Assumption. We construct weighted graphs $\Gamma_{\varepsilon,\rho}=\Gamma(X_{\varepsilon},\mu,\rho)$ for a metric-measure space $M$ by choosing weighted graphs with the same parameters $\varepsilon\ll\rho$ for each manifold part $M_l$. The graph Laplacian is defined as
\begin{equation*}
\Delta_{\Gamma}u(x_i) = \frac{2(n+2)}{\nu_n \rho^{n+2}}\sum_l \bigg(\sum_{j: d_l(x_i^l,x_j)<\rho}\mu_j(u(x_j)-u(x_i))+\sum_{j: \widetilde{d_l}(x_i^l,x_j)<\rho}\mu_j(u(x_j)-u(x_i))\bigg),
\end{equation*}
where $x_i^l$ denotes the mirror image of the vertex $x_i$ in $M_l$, and $d_l$ is the Riemannian distance of $M_l$. Its discrete energy is given by
\begin{eqnarray*}
||\delta u||^2 = \frac{n+2}{\nu_n \rho^{n+2}}\sum_{i,l} \big( \sum_{j: d_l(x_i^l,x_j)<\rho}+\sum_{j: \widetilde{d}_l(x_i^l,x_j)<\rho}\big) \mu_i\mu_j |u(x_j)-u(x_i)|^2. \nonumber
\end{eqnarray*}
The graph Laplacian $-\Delta_{\Gamma}$ is a self-adjoint non-negative operator with respect to the inner product in $L^2(X_{\varepsilon})$. The $k$-th eigenvalue of the graph Laplacian is denoted by $\lambda_k(\Gamma)$ and $\lambda_0(\Gamma)=0$. The standard min-max principle for eigenvalues also applies in this case. We prove Theorem \ref{main2} and \ref{main3} by obtaining the lower and upper bound for $\lambda_k(\Gamma)$ in terms of the Laplacian spectrum defined in Section $4$. The proof constantly contains multiple summations, and it is convenient for us to introduce a few notations to simplify reading.

For $x\in M_l$, we define $\widetilde{d}_l$ with respect to the Riemannian distance $d_l$ of $M_l$ as in Definition \ref{geodesic}, and define $\widehat{d}_l$ as
\begin{equation}\label{dl}
\{y:\widehat{d}_l(x,y)<r\}=\{y:d_l(x,y)<r\}\sqcup \{y:\widetilde{d}_l(x,y)<r\},
\end{equation}
where the union is a disjoint union. Denote the two sets on the right-hand side above by $B_r(x)$ and $\widetilde{B}_r(x)$. And we define $\widehat{d}$ (without an index) as follows:
\begin{equation}\label{d}
\{y:\widehat{d}(x,y)<r\}=\bigsqcup_l \{y:\widehat{d}_l(x^l,y)<r\}.
\end{equation}
Define the disjoint union of all reflected balls centered at the mirror images by
\begin{equation}\label{Bstar}
\widehat{B}_r^{\ast}(x)=\bigsqcup_{l} \widehat{B}_r(x^l),
\end{equation}
where the reflected ball $\widehat{B}_r(x^l)$ is the reduced domain where only the classical reflection at the boundary can occur, as in Definition \ref{ball}. Thus $\widehat{B}_r^{\ast}(x)$ by definition also only involves the classical reflection, while $\{y:\widehat{d}(x,y)<r\}$ involves all possible behaviors of geodesics. In this way, the discrete energy can be conveniently written as
\begin{equation}\label{deltau}
||\delta u||^2=\frac{n+2}{\nu_n \rho^{n+2}}\sum_{i} \sum_{j: \widehat{d} (x_i,x_j)<\rho}\mu_i\mu_j |u(x_j)-u(x_i)|^2.\\
\end{equation}

\subsection*{Lower bound for $\lambda_k(\Gamma)$}

The main difficulty is to obtain the lower bound for $\lambda_k(\Gamma)$, which demands a suitable kernel to do the job as explained in Section $1$. Without loss of generality, assume each gluing locus is connected. For an arbitrary point $x\in M$, say $x\in M_j$, we define mirror images $x^l$ of $x$ on all $M_l$ satisfying $M_j\cap M_l\neq\emptyset$. For a fixed $r$ satisfying $r<\rho\ll 1$ and a given function $f\in L^2(M)$, we define a Lipschitz function $\Lambda_r f \in H^1(M)$ by
$$\Lambda_r f(x)=\int_M k_r(x,y)f(y)dy,$$
where
$$k_r(x,y)=\frac{1}{1+\sum_{l\neq j}\alpha_l(x)}\frac{1}{r^n}\bigg(\phi(\frac{\widehat{d_j}(x,y)}{r})\chi_{\widehat{B}_r(x)}(y)+\sum_{l\neq j}\alpha_l(x)
\phi(\frac{\widehat{d_l}(x^l,y)}{r})\chi_{\widehat{B}_r(x^l)}(y)\bigg),$$
where the sum ranges over all $l$ satisfying $M_j\cap M_l\neq\emptyset$, and $\chi$ is the characteristic function. The function $\phi:\mathbb{R}_{\geq 0}\to \mathbb{R}_{\geq 0}$ is defined as $\phi(t)=\frac{n+2}{2\nu_n}(1-t^2)$ if $t\in[0,1]$ otherwise $0$, and
$$\alpha_l(x)=\left\{ \begin{array}{lll}
0,\;\;\;d_j(x,M_j\cap M_l)\geqslant r^{\frac{3}{4}};\\
1,\;\;\;x\in M_j\cap M_l;\\
1-\frac{1}{r^{\frac{3}{4}}}d_j(x,M_j\cap M_l),\;\; \textrm{otherwise}.\\
\end{array} \right.$$

Note that $k_r(x,y)$ is not continuous, but its integral with respect to $y$ is continuous in $x$, since the reflected ball $\widehat{B}_r(x)$ varies continuously. From the definition of $\alpha_l$, we immediately have $|\nabla\alpha_l|\leqslant\frac{C}{r^{\frac{3}{4}}}$ almost everywhere, where the gradient is taken with respect to the Riemannian structure of $M_j$. This is due to the fact that the function $h(x)=d_j(x, M_j\cap M_l)$ is a Lipschitz function with the Lipschitz constant $1$. Therefore it is differentiable almost everywhere and $|\nabla_x d_j(x, M_j\cap M_l)|\leqslant \sqrt{n}$. The continuity of $\Lambda_r f$ at the gluing locus is guaranteed by the Assumption, the definition of mirror images and the cut-off function $\alpha_l(x)$. To ensure continuity at the exact $\rho^{\frac{3}{4}}$-distance from the gluing locus, the distance within which $\alpha_l(x)$ is nonzero has to be smaller than the distance within which the mirror images are defined, which means exactly $r<\rho$.\\
\indent Let's consider for a moment a simple case: three book pages glued along an edge. For a point sufficiently close to the edge on the first plane, we use the symmetry Assumption to define its mirror images on the other two planes. Now if we choose a point far from the edge, the point should not be affected by other planes at all, and this is reflected by the cut-off function $\alpha_l$ being zero, which results in only information from the first plane being gathered by the kernel $k_r$. As the chosen point gets closer to the edge, the function $\alpha_l$ starts being nonzero, causing the kernel $k_r$ to gather information from two other planes, and their impacts become larger as $\alpha_l$ grows. Finally, when the chosen point reaches the edge, $\alpha_l$ achieves $1$, which means all three planes will have equal impact on the kernel, since the mirror images also move to the same point on the edge thanks to the Assumption. This means if we travel on any plane to that same point on the edge, the information being gathered will be exactly the same, which ensures the continuity at the gluing locus.\\
\indent Define $\theta(x)=\Lambda_r (1_M)$ for $r<\rho\ll 1$, and we start with an estimate of the value and the derivative of $\theta(x)$.

\begin{newtheta}\label{newtheta}
For almost every $x\in M$, we have\\
$(1)$ $|\theta(x)-1|\leqslant Cr$;\\
$(2)$ $|\nabla\theta(x)|\leqslant \frac{C}{r}o(1)+\frac{C}{r^{\frac{3}{4}}}(1+r)$,
where $o(1)$ is the rate of $||\nabla\Phi||_r$ converging to $1$ as $r\to 0$ in the Assumption and the constants explicitly depend on $n,m,K_1,K_2$.\\
In particular, if the assumption of Theorem \ref{main3} is satisfied, the second inequality improves to\\
$(2^{\ast})$ $|\nabla\theta(x)|\leqslant \frac{C}{r^{\frac{3}{4}}}(1+r)$.
\end{newtheta}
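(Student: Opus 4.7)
The plan is to reduce both estimates to the single-manifold case of Lemma \ref{thetax}, combined with contributions from the cutoffs $\alpha_l$ and the deviation of the homeomorphisms $\Phi_{jl}$ from isometries. Set $A(x):=1+\sum_{l\neq j}\alpha_l(x)$ and, for each $l$ with $M_j\cap M_l\neq\emptyset$, let $I_l(x):=\frac{1}{r^n}\int_{\widehat{B}_r(x^l)}\phi(\widehat{d}_l(x^l,y)/r)\,dy$. The key observation is that $I_l(x)=\theta_l(x^l)$, where $\theta_l$ is the single-manifold analogue of $\theta$ attached to $M_l$ as in Lemma \ref{thetax}, and $\theta(x)=A(x)^{-1}\bigl(I_j(x)+\sum_{l\neq j}\alpha_l(x)I_l(x)\bigr)$.

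For part (1), with the convention $\alpha_j\equiv 1$, I would rewrite $\theta(x)-1=A(x)^{-1}\sum_l\alpha_l(x)(I_l(x)-1)$. Lemma \ref{thetax}(1) applied on each $M_l$ gives $|I_l(x)-1|\leq Cr$, and $|\theta(x)-1|\leq Cr$ follows at once since $A(x)^{-1}\sum_l\alpha_l(x)=1$.

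For part (2) I would differentiate via the product rule and estimate each resulting summand separately. The terms involving $\nabla A=\sum_{l\neq j}\nabla\alpha_l$ and $\sum_{l\neq j}(\nabla\alpha_l)I_l$ are controlled using $|\nabla\alpha_l|\leq C r^{-3/4}$ almost everywhere (which holds because $d_j(\cdot,M_j\cap M_l)$ is $1$-Lipschitz) together with $|I_l|\leq 1+Cr$ from part (1), yielding at most $\frac{C}{r^{3/4}}(1+r)$. The term $\nabla I_j(x)$ is bounded by $C$ directly from Lemma \ref{thetax}(2) applied on $M_j$, and is absorbed into the $\frac{C}{r^{3/4}}(1+r)$ contribution.

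The main obstacle is the term $\nabla I_l(x)$ for $l\neq j$, where differentiation takes place in $T_xM_j$ while the integrand lives on $M_l$. Applying the chain rule to $I_l(x)=\theta_l(\Phi_{jl}(x))$ and repeating the identity from the proof of Lemma \ref{thetax}(2) inside $T_{x^l}M_l$ (substituting $v=\exp_{x^l}^{-1}(y)$) gives
\[
\nabla I_l(x)=\frac{n+2}{\nu_n r^{n+2}}\int_{\mathcal{B}_r(0)\setminus\mathcal{W}_r(x^l)}(\nabla\Phi_{jl})^*v\cdot J_{x^l}(v)\,dv.
\]
I would then split $(\nabla\Phi_{jl})^*v=v+\bigl((\nabla\Phi_{jl})^*v-v\bigr)$. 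The first piece is handled exactly as in Lemma \ref{thetax}(2), using $\int_{\mathcal{B}_r(0)}v\,dv=0$ to replace $J_{x^l}(v)$ by $J_{x^l}(v)-1$, then invoking the Jacobian bound $|J_{x^l}(v)-1|\leq C|v|$ together with the volume bound from Lemma \ref{W}; this contributes at most $C$. The residual piece is estimated by the Assumption as $o(1)\int_{\mathcal{B}_r(0)}|v|(1+Cr)\,dv\leq C\,o(1)\,r^{n+1}$, which after division by $r^{n+2}$ produces the $\frac{C}{r}o(1)$ contribution. Summing over $l$ and collecting all estimates yields the stated inequality. For $(2^\ast)$, the discussion following the Assumption gives the explicit rate $o(1)=C(K_1)r^2$ under the hypotheses of Theorem \ref{main3}, so $\frac{C}{r}o(1)=Cr$ is absorbed into $\frac{C}{r^{3/4}}(1+r)$.
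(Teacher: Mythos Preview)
Your approach is essentially the paper's: both isolate the contributions from $\nabla\alpha_l$ (bounded via $|\nabla\alpha_l|\leq Cr^{-3/4}$) and from differentiating the averaging integral on each $M_l$ (handled by pulling back to $\mathcal{B}_r(0)\subset T_{x^l}M_l$ and invoking symmetry plus the Jacobian estimate, as in Lemma~\ref{thetax}). You organize this via the product rule on $\theta=A^{-1}\sum_l\alpha_l I_l$, while the paper computes $\partial_x k_r$ directly into four terms; the resulting pieces coincide.

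One step does not go through as written: the split $(\nabla\Phi_{jl})^*v=v+\bigl((\nabla\Phi_{jl})^*v-v\bigr)$. First, the two vectors live in different tangent spaces ($v\in T_{x^l}M_l$, $(\nabla\Phi_{jl})^*v\in T_xM_j$), so the subtraction needs an identification. More seriously, the Assumption only asserts $\|\nabla\Phi\|_r\to 1$, i.e.\ the operator norm tends to $1$; this says nothing about the distance of $(\nabla\Phi_{jl})^*$ to the identity (a rotation has norm $1$ yet is far from the identity), so the claimed bound $|(\nabla\Phi_{jl})^*v-v|\leq o(1)\,|v|$ is not justified by what is assumed. The paper's treatment of this same term is also informal (``after cancelling out vectors of opposite directions, the resulting vectors lie in a ball of radius $r\,o(1)$''). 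A clean fix: at a.e.\ $x$ where $\Phi_{jl}$ is differentiable (it is bi-Lipschitz), $(\nabla\Phi_{jl})^*$ is a fixed linear map and may be pulled outside the integral, leaving precisely $\frac{n+2}{\nu_n r^{n+2}}\int_{\mathcal{B}_r(0)\setminus\mathcal{W}_r(x^l)}v\,J_{x^l}(v)\,dv=\nabla_{x^l}\theta_l(x^l)$, which is bounded by $C$ directly from Lemma~\ref{thetax}(2). Then $|\nabla I_l(x)|\leq \|\nabla\Phi\|_r\cdot C\leq C(1+o(1))$, which is in fact stronger than the $\frac{C}{r}o(1)$ recorded in the statement. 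With this correction your argument is complete and matches the paper's.
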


\begin{proof}
(1) is straightforward considering the fact that $\frac{1}{r^n}\int_{\mathbb{R}^n}\phi(\frac{d(x,y)}{r})dy=1$ for $x\in \mathbb{R}^n$, and the factor $(1+Cr)$ comes from the Jacobian estimate in Lemma \ref{Jacobi}. One can refer to a similar argument in Lemma \ref{thetax}. \\
\indent For $(2)$, given a point $x\in M_j$, we have
$$\frac{\partial}{\partial x}d_l (x^l,y)=-(\nabla\Phi_{jl})(x)\frac{exp_{x^l}^{-1}(y)}{d_l (x^l,y)}.$$
By a straightforward calculation we get
$$\nabla \theta(x)=\int_M \frac{\partial}{\partial x}k_r (x,y) dy,$$
and
\begin{eqnarray}\label{kr}
\frac{\partial}{\partial x}k_r (x,y) &=& \frac{1}{1+\sum_l\alpha_l}\frac{n+2}{\nu_n r^{n+2}}exp_x^{-1}(y)\chi_{\widehat{B}_r(x)} \nonumber \\
&+& \frac{1}{1+\sum_l \alpha_l}\frac{n+2}{\nu_n r^{n+2}} \sum_{l}\alpha_l(\nabla\Phi) exp_{x^l}^{-1}(y)\chi_{\widehat{B}_r(x^l)} \nonumber \\
&+& \frac{1}{1+\sum_l\alpha_l}\frac{n+2}{2\nu_n r^n}\sum_{l}(\nabla\alpha_l)\phi(\frac{\widehat{d}_l(x^l,y)}{r})\chi_{\widehat{B}_r(x^l)}\nonumber \\
&+& (\nabla\frac{1}{1+\sum_l\alpha_l})\frac{1}{r^n}\bigg(\phi(\frac{\widehat{d}_j(x,y)}{r})\chi_{\widehat{B}_r(x)}+\sum_{l}\alpha_l\phi(\frac{\widehat{d}_l(x^l,y)}{r})\chi_{\widehat{B_r}(x^l)}\bigg). \nonumber \\
\end{eqnarray}
Note that the sum ranges over all $l\neq j$ satisfying $M_j\cap M_l\neq\emptyset$. Once integrating (\ref{kr}) over $M$, the last two terms are bounded by $|\nabla\alpha_l|\leqslant C/r^{3/4}$, keeping in mind that $\frac{1}{r^n}\int_{\mathbb{R}^n}\phi(\frac{d(x,y)}{r})dy=1$ for $x\in \mathbb{R}^n$.\\
\indent Now let's deal with the first two terms of (\ref{kr}). In a similar estimate for manifolds (see Lemma \ref{thetax}), we replaced the Jacobian $J_x$ by $J_x-1$ in order to improve the rate, thanks to the symmetry of $\mathcal{B}_r(0)\subset T_x M$ with respect to the origin and the fact that $\mathcal{W}_r(x)$ has small measure. This gives us some extra power of $r$ to compensate for the denominator $r^{n+2}$. We still have the same thing for the first term. But for the second term, the symmetry of $\mathcal{B}_r(0)$ may be distorted under the map $\nabla\Phi$. However, thanks to the Assumption that $\nabla \Phi$ almost preserves length near the gluing locus, after canceling out the vectors of opposite directions, all the resulting vectors are contained in a ball of radius $r o(1)$, where $o(1)$ is the rate of $||\nabla\Phi||_r$ converging to $1$ as $r\to 0$. Hence the second inequality follows, which yields $(2^{\ast})$ with $o(1)=C(K_1)r^2$ in that specific case from previous discussions in Section $1$.
\end{proof}
Note that the extra rate of $r$ we get from the symmetry Assumption is crucial. We will see the proof does not work without the extra rate in a similar way as the case for manifolds. \\
\indent For a function $f\in L^2(M)$, we consider the Lipschitz function $I_r f:=\theta^{-1}\Lambda_r f$. We set $r=\rho-4\varepsilon$ and $f=P^{\ast}u$, where $P^{\ast}u$ is the piecewise constant function defined in Remark \ref{remark} for a discrete function on the $\varepsilon$-net $X_{\varepsilon}$. Denote this specific choice by $Iu:=I_{\rho-4\varepsilon}(P^{\ast}u)=\theta^{-1}\Lambda_{\rho-4\varepsilon}(P^{\ast}u)$. We define the analogue of the discrete energy by
\begin{equation}\label{Er}
\widehat{E}_r(f)=\int_M\int_{\widehat{B}_r^{\ast}(x)}|f(y)-f(x)|^2 dy dx,
\end{equation}
where $\widehat{B}_r^{\ast}(x)$ is defined in (\ref{Bstar}). We proceed to control the $L^2$-norm and energy of $I u$ in terms of $\widehat{E}_{\rho-4\varepsilon} (P^{\ast}u)$ in the following lemma.
\begin{new2}\label{new2}
For $u\in L^2(X_{\varepsilon})$, we have the following estimates:
\begin{flalign*}
&(1)\;\;\; ||Iu-P^{\ast}u||_{L^2(M)}^2\leqslant \frac{C}{(\rho-4\varepsilon)^n}\widehat{E}_{\rho-4\varepsilon}(P^{\ast}u); \\
&(2)\;\;\; ||\nabla(Iu)||_{L^2(M)}^2 \leqslant \frac{n+2}{\nu_n (\rho-4\varepsilon)^{n+2}}(1+o(1)+C\rho^{\frac{1}{4}})\widehat{E}_{\rho-4\varepsilon} (P^{\ast}u),
\end{flalign*}
where $\widehat{E}_r(f)$ is defined in (\ref{Er}), and the constants explicitly depend on $n,m,K_1, K_2$.
\end{new2}

\begin{proof}
We prove the lemma for an arbitrary choice of $r$ and $f\in L^2(M)$. First we estimate the bound of the $L^2$-norm. By the definition of $I_r$ and $\theta$, we have
\begin{eqnarray}\label{Irf}
I_r f(x)-f(x) &=& \theta^{-1}(\Lambda_r f)(x)-f(x) \nonumber \\
& = & \theta^{-1}(\Lambda_r f)(x)- \theta^{-1}\Lambda_r (f(x)1_M) \nonumber \\
&=& \theta^{-1}\int_{M}(f(y)-f(x)) k_r(x,y) dy.
\end{eqnarray}
Since $|\theta|\leqslant C$, $k_r\leqslant \frac{C}{r^n}$ and $k_r$ is only supported in $\widehat{B}_r^{\ast}(x)$ from the definition of $k_r$, by the Cauchy-Schwarz inequality we get
\begin{eqnarray*}
|I_r f(x)-f(x)|^2 &=& |\theta^{-1}\int_M (f(y)-f(x))k_r(x,y)dy|^2\\
&\leqslant& \theta^{-2}(\int_M k_r(x,y) dy)(\int_M |f(y)-f(x)|^2 k_r(x,y)dy)\\
&\leqslant& \frac{C}{r^n}\int_{\widehat{B}_r^{\ast}(x)}|f(y)-f(x)|^2 dy.
\end{eqnarray*}
Integrate over $M$, and we obtain
\begin{equation}
||I_r f-f||_{L^2(M)}^2\leqslant \frac{C}{r^n}\widehat{E}_r(f).
\end{equation}
This finishes the first part of the lemma. \\

\indent Next we turn to the energy estimate. Differentiating (\ref{Irf}) at a point $x\in M_j-S$, and by (\ref{kr}) we have
\begin{eqnarray*}
\nabla(\theta^{-1}\Lambda_r f)(x) &=& \theta^{-1}\int_{M} (f(y)-f(x))\frac{\partial}{\partial x}k_r(x,y) dy + \nabla(\theta^{-1})\int_{M} (f(y)-f(x))k_r(x,y) dy \\
&=& \frac{1}{1+\sum\alpha_l}\frac{n+2}{2\nu_n r^n}\theta^{-1}\sum_l(\nabla\alpha_l)\int_{\widehat{B}_r(x^l)}(f(y)-f(x))\phi(\frac{\widehat{d}_l(x^l,y)}{r})dy\\
&+& (\nabla\frac{1}{1+\sum\alpha_l})(1+\sum_l\alpha_l)\theta^{-1}\int_M (f(y)-f(x))k_r(x,y)dy \\
&+& \nabla(\theta^{-1})\int_{M} (f(y)-f(x))k_r(x,y)dy + \theta^{-1} A_1,
\end{eqnarray*}
where the second term is obtained by the definition of $k_r$, and 
\begin{eqnarray*}
A_1(x)&=&\frac{1}{1+\sum\alpha_l}\frac{n+2}{\nu_n r^{n+2}}\bigg(\int_{\widehat{B}_r(x)} (f(y)-f(x))exp_x^{-1}(y)dy \\
&&+ \sum_{l}\alpha_l \int_{\widehat{B}_r(x^l)} (f(y)-f(x))(\nabla\Phi)exp_{x^l}^{-1}(y)dy \bigg).
\end{eqnarray*}
We denote the first three terms by $A_2$, $A_3$, and $A_4$ respectively. Note that the sum ranges over all $l\neq j$ satisfying $M_j\cap M_l\neq\emptyset$. Due to $|\phi|\leqslant 1$, $|\nabla\alpha_l|\leqslant C/r^{3/4}$ and the Cauchy-Schwarz inequality, we get
\begin{eqnarray*}
|A_2|^2 &=& \frac{1}{(1+\sum\alpha_l)^2}\frac{(n+2)^2}{4\nu^2_n r^{2n}}\theta^{-2}|\sum_{l}(\nabla\alpha_l)\int_{\widehat{B}_r(x^l)}(f(y)-f(x))\phi(\frac{\widehat{d}_l(x^l,y)}{r})dy|^2 \\
&\leqslant& \frac{C}{r^{2n}}\sum_l |\nabla\alpha_l|^2\big(\sum_l \int_{\widehat{B}_l(x^l)}|f(y)-f(x)|^2 dy\int_{\widehat{B}_r(x^l)} 1 dy\big)\\
&\leqslant& \frac{C}{r^{n+\frac{3}{2}}}\int_{\widehat{B}_r^{\ast}(x)}|f(y)-f(x)|^2 dy.
\end{eqnarray*}
The last inequality is due to the volume estimate in Lemma \ref{Jacobi}. The same bound also applies to $A_3$. For $A_4$, since $|\nabla\theta|\leqslant \frac{C}{r}o(1)+\frac{C}{r^{\frac{3}{4}}}(1+r)$ by Lemma \ref{newtheta}, by the Cauchy-Schwartz inequality we have
\begin{eqnarray*}
|A_4|^2 &\leqslant& \frac{C}{r^{2}}(o(1)+r^{\frac{1}{2}}) \big(\int_M |f(y)-f(x)|k_r(x,y) dy\big)^2 \\
&\leqslant& \frac{C}{r^{2}}(o(1)+r^{\frac{1}{2}}) \big(\int_M |f(y)-f(x)|^2 k_r(x,y) dy\big)\big(\int_M k_r(x,y)dy\big) \\
&\leqslant& \frac{C}{r^{2}}(o(1)+r^{\frac{1}{2}}) \int_M |f(y)-f(x)|^2 k_r(x,y) dy,
\end{eqnarray*}
where the last inequality is due to the definition and the boundedness of $\theta$. Since $k_r$ is only supported in $\widehat{B}_r^{\ast}(x)$ and $k_r\leqslant \frac{C}{r^n}$, we get
$$|A_4|^2 \leqslant \frac{C}{r^{n+2}}(o(1)+r^{1/2}) \int_{\widehat{B}_r^{\ast}(x)} |f(y)-f(x)|^2 dy.$$
At last, we come to $A_1$. Denote its universal term by
$$A(x) = \frac{n+2}{\nu_n r^{n+2}}\int_{\widehat{B}_r (x^l)} (f(y)-f(x))(\nabla\Phi)exp_{x^l}^{-1}(y)dy.$$
For $w=\frac{A(x)}{|A(x)|}$, we have $|A(x)|=\langle A(x),w\rangle$. Then we get
$$|A(x)|\leqslant \frac{n+2}{\nu_n r^{n+2}}\int_{\mathcal{B}_r(0)-\mathcal{W}_r(x^l)\subset T_{x^l}M_l}|(f(exp_{x^l}(v))-f(x))\langle (\nabla\Phi)v,w \rangle J_{x^l}(v)| dv.$$
By the Cauchy-Schwarz inequality and (\ref{sublemma}), we have
\begin{eqnarray*}
|A(x)|^2 &\leqslant& (\frac{n+2}{\nu_n r^{n+2}})^2(\int_{\mathcal{B}_r(0)-\mathcal{W}_r(x^l)}|f(exp_{x^l}(v))-f(x)|^2 |J_{x^l}(v)|^2 dv)(\int_{\mathcal{B}_r(0)} \langle (\nabla\Phi)v,w\rangle^2 dv) \\
&\leqslant& (\frac{n+2}{\nu_n r^{n+2}})^2(\int_{\mathcal{B}_r(0)-\mathcal{W}_r(x^l)}|f(exp_{x^l}(v))-f(x)|^2 |J_{x^l}(v)|^2 dv)(\int_{\mathcal{B}_{r+ro(1)}(0)} \langle v,w\rangle^2 dv) \\
&=& \frac{n+2}{\nu_n r^{n+2}}(1+o(1))\int_{\mathcal{B}_r(0)-\mathcal{W}_r(x^l)}|f(exp_{x^l}(v))-f(x)|^2 |J_{x^l}(v)|^2 dv \\
&\leqslant& \frac{n+2}{\nu_n r^{n+2}}(1+o(1)+Cr)\int_{\widehat{B}_r^{\ast}(x)}|f(y)-f(x)|^2 dy,
\end{eqnarray*}
where we used $||\nabla\Phi||_r \leqslant 1+o(1)$, which causes the radius of the ball to enlarge by a factor $1+o(1)$, and the Jacobian estimate (Lemma \ref{Jacobi}). Therefore, we get
$$|A(x)|\leqslant \sqrt{\frac{n+2}{\nu_n r^{n+2}}}(1+o(1)+Cr)\sqrt{\int_{\widehat{B}_r^{\ast}(x)}|f(y)-f(x)|^2 dy}.$$
Observe that every term in $A_1$ can be bounded in the same way as $A(x)$. After adding all the factors $\alpha_l(x)$ and then averaging by $1+\sum\alpha_l(x)$, we get exact the same estimate for $A_1(x)$.
$$|A_1(x)|\leqslant \sqrt{\frac{n+2}{\nu_n r^{n+2}}}(1+o(1)+Cr)\sqrt{\int_{\widehat{B}_r^{\ast}(x)}|f(y)-f(x)|^2 dy}.$$
Now combine the estimates for $A_2,\,A_3,\,A_4$, and we obtain
\begin{eqnarray*}
|\nabla(\theta^{-1}\Lambda_r f)(x)| &\leqslant& |A_1|+|A_2|+|A_3|+|A_4| \\
&\leqslant& \sqrt{\frac{n+2}{\nu_n r^{n+2}}}(1+o(1)+Cr^{\frac{1}{4}})\sqrt{\int_{\widehat{B}_r^{\ast}(x)}|f(y)-f(x)|^2 dy}.
\end{eqnarray*}
Therefore, we finally obtain
\begin{equation}
||\nabla(I_r f)||_{L^2(M)}^2 \leqslant \frac{n+2}{\nu_n r^{n+2}}(1+o(1)+Cr^{\frac{1}{4}})\widehat{E}_r (f).
\end{equation}
\end{proof}

\subsection*{Upper bound for $\lambda_k(\Gamma)$}

Now we turn to the upper bound for $\lambda_k(\Gamma)$. Given a weighted graph $\Gamma_{\varepsilon,\rho}=\Gamma(X_{\varepsilon},\mu,\rho)$, define the discretization operator $P:L^2(M)\to L^2(X)$ by
$$Pf(x_i)=\mu_i^{-1}\int_{V_i} f(x)dx,$$
and $P^{\ast}: L^2(X)\to L^2(M)$ by
$$P^{\ast}u=\sum_{i=1}^{N}u(x_i)1_{V_i},$$
where $N$ is the number of points of the $\varepsilon$-net $X_{\varepsilon}$. It immediately follows that $||P^{\ast}u||_{L^2 (M)}=||u||_{L^2 (X)}$. Lemma \ref{2.3} stays true since the weighted graph is constructed by choosing weighted graphs for each manifold part, so we only need to obtain the energy estimate. Denote by $\Omega=\Omega_{2\rho^\frac{3}{4}}$ the region within $2\rho^{\frac{3}{4}}$ respective Riemannian distance from the gluing loci $S$. Recall that $\rho^{\frac{3}{4}}$ is the distance within which we define mirror images. Outside $\Omega$, everything is the same as for one single manifold. Therefore Lemma \ref{2.2} (or Lemma $3.3$ in \cite{BIK}) holds for this region, namely for $r<2\rho$ and $f\in C^{\infty}(M)$,
\begin{equation}\label{ErOc}
\widehat{E}_{r}(f,\Omega^c)\leqslant (1+Cr)\frac{\nu_n r^{n+2}}{n+2}||\nabla f||_{L^2(M)}^2,
\end{equation}
where $$\widehat{E}_{r}(f,V)=\int_V\int_{\widehat{B}_r^{\ast}(x)}|f(y)-f(x)|^2 dy dx,\textrm{  for }V\subset M.$$
Inside $\Omega$, we can estimate the energy in terms of the $L^{\infty}$-norm $||\nabla f||_{L^{\infty}(M)}$.
\begin{eqnarray}\label{ErO}
\widehat{E}_{r}(f,\Omega) &\leqslant& C\int_{\Omega}\int_{\widehat{B}_r^{\ast}(x)} ||\nabla f||_{\infty}^2 (\rho^{\frac{3}{4}}+r)^2 dy dx \nonumber \\
&\leqslant& C||\nabla f||_{\infty}^2 \rho^{\frac{3}{4}} r^n (\rho^{\frac{3}{4}}+r)^2,
\end{eqnarray}
where we used the facts that the volume $vol(\Omega)\leqslant Cvol_{n-1}(S)\rho^{\frac{3}{4}}$, and the volume $vol(\widehat{B}_r^{\ast}(x)) \leqslant C(1+o(1))r^n$ (due to the Assumption), and within $\Omega$ the distance between $x$ and its mirror image $x^l$ is controlled by $\rho^{\frac{3}{4}}(2+o(1))$ (again due to the Assumption). Here $vol_{n-1}(S)$ denotes the $(n-1)$-dimensional volume of the gluing locus $S$. Combining all these, we prove the following lemma.
\begin{new3}\label{new3}
For $f\in C^{\infty} (M)$, we have the following estimates:
\begin{flalign*}
&(1)\;\;\; ||f-P^{\ast}P f||_{L^2}^2\leqslant C\rho^{2}(||\nabla f||_{L^2(M)}^2+||\nabla f||_{L^{\infty}(M)}^2);\\
&(2)\;\;\; \widehat{E}_{\rho+4\varepsilon}(f,\Omega^c)\leqslant (1+C\rho)\frac{\nu_n (\rho+4\varepsilon)^{n+2}}{n+2}||\nabla f||_{L^2(M)}^2;\\
&(3)\;\;\; \widehat{E}_{\rho+4\varepsilon}(f,\Omega)\leqslant C||\nabla f||_{\infty}^2 (1+o(1))(\rho+4\varepsilon)^n\rho^{\frac{9}{4}},
\end{flalign*}
where $\Omega=\Omega_{2\rho^\frac{3}{4}}$ is the region within $2\rho^{\frac{3}{4}}$ respective Riemannian distance from the gluing loci $S$, and the constants explicitly depend on $n,m,K_1,K_2,vol(M),vol_{n-1}(S)$.
\end{new3}

\begin{proof}
The last two inequalities were already proved by setting $r=\rho+4\varepsilon$ in (\ref{ErOc}) and (\ref{ErO}). By Lemma \ref{2.3}, we get,
$$\int_{V_i} |f(x)-P f(x_i)|^2 dx \leqslant \frac{C}{r^n} \int_{V_i}\int_{\{y:\widehat{d}(x,y)<r\}} |f(y)-f(x)|^2 dy dx.$$
We emphasize that the difference between $\widehat{B}_r^{\ast}(x)$ and $\{y:\widehat{d}(x,y)<r\}$ in (\ref{d}), (\ref{Bstar}). The former contains only the simple behavior of the classical reflection at the boundary with no geodesics touching the boundary, while the latter contains all possible behaviors. Then by the definition of $P^{\ast}$, (\ref{ErOc}) and (\ref{ErO}), we get
\begin{eqnarray*}
||f-P^{\ast}P f||_{L^2}^2 &=& \sum_i \int_{V_i} |f(x)-P f(x_i)|^2 dx \\
&\leqslant& \frac{C}{r^n} \bigg(\widehat{E}_{r}(f) +  2\sum_{l}\int_{M}\int_{exp(\mathcal{W}_r(x^l))} |f(y)-f(x)|^2 dy dx\bigg)\\
&\leqslant &  \frac{C}{r^n}(\widehat{E}_{r}(f,\Omega^c)+\widehat{E}_{r}(f,\Omega)) + \frac{C}{r^n}m(\mathcal{W}_r(x^l)) (\rho^{\frac{3}{4}}+r)^2 ||\nabla f||_{L^{\infty}(M)}^2 \\
&\leqslant & Cr^2||\nabla f||_{L^2(M)}^2+C\rho^{\frac{9}{4}}||\nabla f||_{\infty}^2 + Cr(\rho^{\frac{3}{4}}+r)^2||\nabla f||_{L^{\infty}(M)}^2. 
\end{eqnarray*}
The second last inequality is due to the fact that the distance between $x$ and its mirror image $x^l$ (if any) is controlled by $\rho^{\frac{3}{4}}(2+o(1))$ due to the Assumption.
We need to add $\mathcal{W}_r(x)$ back twice for each manifold part, because both $d$ and $\widetilde{d}$ include points which are reached by geodesics touching the boundary. The first inequality follows by setting $r=\rho$.
\end{proof}
So far we have obtained estimates in terms of $\widehat{E}_r(f)$, and the next lemma states the relation between $\widehat{E}_r(f)$ and the discrete energy (\ref{deltau}).

\begin{new4}\label{new4}
For $f\in L^2(M)$ and $u\in L^2(X)$, we have the following estimates:
\begin{flalign*}
&(1)\;\;\; ||\delta u||^2\geqslant \frac{n+2}{\nu_n \rho^{n+2}}\widehat{E}_{\rho-4\varepsilon}(P^{\ast}u);\\
&(2)\;\;\; ||\delta(Pf)||^2\leqslant \frac{n+2}{\nu_n \rho^{n+2}}\widehat{E}_{\rho+4\varepsilon}(f)+C(n,m,vol(M))\rho^{\frac{1}{2}}||\nabla f||^2_{L^{\infty}(M)}.
\end{flalign*}
\end{new4}

\begin{proof}
Since $\widetilde{d}$ satisfies the triangle inequality, we have
\begin{equation}\label{triangle}
\{y: \widehat{d}(x_i,y)<\rho-4\varepsilon\} \subset \bigcup_{j: \widehat{d}(x_i,x_j)<\rho} V_j \subset \{y: \widehat{d}(x_i,y)<\rho+4\varepsilon\},
\end{equation}
which implies the first inequality, since the discrete energy already contains more information than $\widehat{E}_r$. To obtain the second one, we need to add back $\mathcal{W}_r$ (twice) which are the situations of geodesics touching the boundary or geodesics intersecting the boundary elsewhere with the presence of one collision point, as defined in Definition \ref{defW}. Thanks to Lemma \ref{W} that $\mathcal{W}_r$ has small measure, this only generates higher order terms. One can refer to a similar estimate in (\ref{Edelta3}).
\end{proof}

Now we are in place to prove Theorem $\ref{main2}$ and $\ref{main3}$.

\begin{proof}[Proof of Theorem \ref{main2} and \ref{main3}]
We only prove the upper bound for $\lambda_k(\Gamma)$; the lower bound can be proved with the same method. We choose the first $k+1$ eigenfunctions $f_0,\cdots,f_{k}$ of $-\Delta_M$ with respect to the eigenvalues $\lambda_0,\lambda_1,\cdots,\lambda_{k}$. We consider the linear space spanned by $Pf_0,\cdots,Pf_{k}$ which is a subspace of $L^2(X)$, and we first prove this subspace is $(k+1)$-dimensional for sufficiently small $\rho$.\\
\indent On $M_l-S$ for any $l$, the eigenfunction $f_k$ is smooth and satisfies the eigenvalue equation pointwise by Proposition \ref{eigenfunction}. Then by (\ref{morrey0}) and the fact that the volume of any $M_l$ is bounded by a constant depending on $n,D,K_1$, we have
$$||\nabla f_k||_{L^{\infty}(M_l)}^2 \leqslant C(n,i_0,D,K_1)(\lambda_k^{n/2+2}+1)||f_k||^2_{L^2(M)},$$
which implies 
\begin{equation}\label{morrey}
||\nabla f_k||_{L^{\infty}(M)}^2 \leqslant C(n,i_0,D,K_1)(\lambda_k^{n/2+2}+1)||f_k||^2_{L^2(M)}.
\end{equation}
Then by Lemma \ref{new3}(1) and (\ref{morrey}), for any $f$ in the first $k+1$ eigenspaces we get
\begin{equation}\label{injective}
||Pf||_{L^2(X)} \geqslant \big(1-C\rho\sqrt{\lambda_k^{n/2+2}+1}\,\big)||f||_{L^2(M)}.
\end{equation}
Hence for sufficiently small $\rho$ depending on $n,m,i_0,D,K_1,\lambda_k$, the linear operator $P$ is injective, which implies that the linear subspace spanned by $Pf_0,\cdots,Pf_{k}$ is $(k+1)$-dimensional.\\
\indent For any $f\in span\{f_0,\cdots,f_{k}\}$, by Lemma \ref{new3}, \ref{new4} and (\ref{morrey}) we get
\begin{eqnarray*}
||\delta(Pf)||^2 &\leqslant& \frac{n+2}{\nu_n \rho^{n+2}}\widehat{E}_{\rho+4\varepsilon}(f)+ C\rho^{\frac{1}{2}}||\nabla f||^2_{L^{\infty}(M)} \\
&\leqslant& (1+C\rho)\frac{ (\rho+4\varepsilon)^{n+2}}{\rho^{n+2}}||\nabla f||_{L^2(M)}^2 + C||\nabla f||_{\infty}^2 (1+o(1))\frac{(\rho+4\varepsilon)^n}{\rho^{n-\frac{1}{4}}} \\
&\leqslant& (1+C\rho+C\frac{\varepsilon}{\rho})||\nabla f||^2_{L^2(M)}+C\rho^{\frac{1}{4}}(\lambda_k^{n/2+2}+1)||f||^2_{L^2(M)}.
\end{eqnarray*}
Combine this inequality with (\ref{injective}), and for sufficiently small $\varepsilon, \rho$ we obtain
$$\frac{||\delta(Pf)||^2}{||Pf||^2_{L^2}} \leqslant (1+C\rho+C\frac{\varepsilon}{\rho})\lambda_k + C\rho^{\frac{1}{4}}(\lambda_k^{n/2+2}+1),$$
which implies the upper bound for $\lambda_k(\Gamma)$ due to the min-max principle. The same procedure with Lemma \ref{new2} yields the lower bound for $\lambda_k(\Gamma)$. Finally we obtain
\begin{equation}\label{rate}
|\lambda_k(\Gamma)-\lambda_k|\leqslant C(\rho^{\frac{1}{4}}+\frac{\varepsilon}{\rho}+\rho^{\frac{1}{4}}\lambda_k^{\frac{n}{2}+1}+o(1))\lambda_k + C\rho^{\frac{1}{4}},
\end{equation}
where $o(1)$ denotes the rate of $||\nabla\Phi||_{\rho}$ converging to $1$ as $\rho\to 0$, and the constant $C$ explicitly depends on $n,m,i_0,D,K_1,K_2,vol_{n-1}(S)$.\\
\indent In particular, if the assumption of Theorem \ref{main3} is satisfied, we know that $||\nabla\Phi||_{\rho}$ converges to $1$ with an explicit rate $o(1)=C(K_1)\rho^2$. Theorem \ref{main3} immediately follows from (\ref{rate}).\\
\end{proof}

\end{document}